\documentclass[10pt]{amsart}
\usepackage{amsmath,amssymb,enumerate}

\newtheorem{thm}{Theorem}[section]
 \numberwithin{equation}{section} 
 \numberwithin{figure}{section} 
 \theoremstyle{plain}
 \theoremstyle{plain}    
 \theoremstyle{plain}    
 \newtheorem{prop}[thm]{Proposition} 
 \theoremstyle{plain}    
 \newtheorem{lem}[thm]{Lemma} 
 \newtheorem{rem}[thm]{Remark}
  \newtheorem{defi}[thm]{Definition}
\newtheorem{exa}[thm]{Example}

%
%

\usepackage{hyperref}
\usepackage{xcolor}
\usepackage{comment}
\usepackage{dsfont}
\usepackage{yhmath}
\definecolor{violet}{rgb}{0.0,0.2,0.7}
\definecolor{rouge}{cmyk}{0.0,0.6,0.4,0.3}
\definecolor{rouge2}{rgb}{0.8,0.0,0.2}
\hypersetup{
    bookmarks=true,         
    unicode=false,          
    pdftoolbar=true,        
    pdfmenubar=true,        
    pdffitwindow=false,     
    pdfstartview={FitH},    
    pdftitle={},    
    pdfauthor={},     
    colorlinks=true,       
   linkcolor=blue,          
    citecolor=blue,        
    filecolor=black,      
    urlcolor=cyan}           

\newcommand{\R}{\mathbb{R}}
\newcommand{\C}{\mathbb{C}}

\newcommand{\Dc}{\mathcal{D}}

\newcommand{\f}{\varphi}
\newcommand{\p}{\psi}
\newcommand{\Ec}{\mathcal{E}}

\newcommand{\SH}{\mathcal{SH}}
\newcommand{\ind}{{\bf 1}}

\newcommand{\setdef}{\ \big\vert \ }
\newcommand{\Capa}{{\rm Cap}}

\newcommand{\SHXo}{\SH_m(X,\omega)}
\newcommand{\vep}{\varepsilon}

\newcommand{\EcXo}{\mathcal{E}(X,\omega,m)}
\newcommand{\Ect}{\mathcal{E}^1(X,\omega,m)}

\newcommand{\Capm}{\Capa_{\omega,m}}

\newtheorem{theorem}{Theorem}

\begin{document}
\title{Mixed Hessian inequalities and uniqueness in the class $\EcXo$} 
\date{\today \\
The first-named author is partially supported by NCN grant 2013/08/A/ST1/00312.\\
The second-named author is  partially supported by the french ANR project MACK} 

\author{S{\l}awomir Dinew} 

\address{Faculty of Mathematics and Computer Science\\ Jagiellonian University 30-348 Krakow\\ Lojasiewicza 6, Poland}

\email{slawomir.dinew@im.uj.edu.pl}

\author{Chinh H. Lu}
\address{Chalmers University of Technology \\ Mathematical Sciences\\
412 96 Gothenburg\\ Sweden}

\email{chinh@chalmers.se}

\begin{abstract}
We prove a general inequality for mixed Hessian measures by global arguments. Our method also yields a simplification for the case of  complex Monge-Amp\`ere equation. Exploiting this and using 
Ko{\l}odziej's mass concentration technique we also prove the uniqueness of the solutions to the complex Hessian equation on compact K\"ahler manifolds in the case of probability measures vanishing on $m$-polar sets.
\end{abstract}

\maketitle

\section{Introduction}
Nonlinear equations of Monge-Amp\`ere and more generally of Hessian type have proven to be a very fruitful branch of research. In the set-up of compact K\"ahler manifolds the solution of the Calabi conjecture by Yau (\cite{Y78}) has literally opened the door for PDE methods in complex geometry. In fact Yau's theorem is still a subject of numerous generalizations (see, in particular \cite{Kol98, BBGZ13} and references therein) and new powerful tools coming from pluripotential theory allowed applications that were previously unreachable.

The complex Hessian equation on compact K\"ahler manifolds is not that geometric because the solutions do not yield K\"ahler metrics (see hovewer \cite{AV10} for some geometric applications). Nevertheless the PDE theory is interesting on its own right, and a strong motivation for considering it is its real counterpart that has been developed some time ago thanks to the works of Trudinger, Wang, Chou and others (see \cite{CNS85, CW01, Tr95, TW99,W09} and references therein). Interestingly while many estimates known from the Monge-Amp\`ere theory apply verbatim, some like the a priori gradient estimate for the Hessian equation turned out to be unexpectedly difficult (this estimate was finally proven in \cite{DK12}). This finally terminated the program for proving an analogue of the Calabi-Yau theorem in the Hessian setting. Afterwards a fruitful potential theory was established culminating in the solution of the smoothing problem for m-subharmonic functions (see \cite{Pl13, LN14}).

In this note we continue the investigation of the pluripotential theory for complex Hessian equations on compact K\"ahler manifolds. We establish some technical results that allow us to prove a general inequality for mixed Hessian measures (an analogue of the main result in \cite{Dinew07}). It is worth mentioning that while the methods from \cite{Dinew07} are local in spirit, here our approach is global which allows an essential simplification. In particular we avoid the delicate approximation by Dirichlet solutions with potentially discontinuous boundary values from \cite{Dinew07}. Note that the local inequalities can be deduced from the global ones (see Theorem \ref{thm: global to local}) thus our method yields a simplification even for the Monge-Amp\`ere equation. 

\begin{theorem}\label{theorem 1}[Theorem \ref{thm: mixed hessian inequality}]
Let $u_1,...,u_m$ be functions in $\EcXo$ and $\mu$ be a positive Radon measure vanishing on $m$-polar sets such that 
$$
H_m(u_j) \geq f_j \mu \ , \forall j=1,...,m,
$$
where $f_j$ are non-negative integrable (with respect to $\mu$) functions. Then  one has
$$
\omega_{u_1} \wedge \cdots \wedge \omega_{u_m}\wedge \omega^{n-m} \geq (f_1\cdots f_m)^{1/m} \mu .
$$
\end{theorem}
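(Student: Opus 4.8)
The plan is to derive the measure inequality from a pointwise inequality of linear algebra, and then to strip away regularity and boundedness by approximation. At a point, after diagonalizing $\omega$, each $\omega_{u_j}$ corresponds to a Hermitian matrix $A_j$ whose eigenvalue vector lies in the closure of the G\r{a}rding cone $\Gamma_m\subset\R^n$; in this model $H_m(u_j)$ becomes $\sigma_m(A_j)$ (the $m$-th elementary symmetric function of the eigenvalues) times $\omega^n$, while $\omega_{u_1}\wedge\cdots\wedge\omega_{u_m}\wedge\omega^{n-m}$ becomes the fully polarized mixed $\sigma_m(A_1,\ldots,A_m)$ times $\omega^n$. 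The G\r{a}rding inequality for the hyperbolic polynomial $\sigma_m$ (equivalently, the concavity of $\sigma_m^{1/m}$ on $\Gamma_m$) gives pointwise $\sigma_m(A_1,\ldots,A_m)\ge \sigma_m(A_1)^{1/m}\cdots\sigma_m(A_m)^{1/m}$, so that for smooth $m$-sh $u_1,\ldots,u_m$ one has, as smooth densities against $\omega^n$,
$$\omega_{u_1}\wedge\cdots\wedge\omega_{u_m}\wedge\omega^{n-m}\ \ge\ \big(H_m(u_1)\cdots H_m(u_m)\big)^{1/m}.$$
Throughout I would phrase the target inequality in terms of Radon--Nikodym densities with respect to a common dominating measure $\lambda:=\mu+\sum_j H_m(u_j)+\omega_{u_1}\wedge\cdots\wedge\omega_{u_m}\wedge\omega^{n-m}$, so that ``$\ge$'' between measures becomes a $\lambda$-a.e.\ inequality between densities. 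Combined with the hypothesis $dH_m(u_j)/d\lambda\ge f_j\,d\mu/d\lambda$, the displayed geometric-mean bound immediately yields the assertion.

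Next I would reduce from $\EcXo$ to bounded potentials. Set $u_j^k:=\max(u_j,-k)\in\SHXo\cap L^\infty$ and $U_k:=\{u_1>-k\}\cap\cdots\cap\{u_m>-k\}$. By locality of the mixed Hessian operator on the open set $U_k$, the measures built from $u_j^k$ agree there with those built from $u_j$; in particular $\ind_{U_k}H_m(u_j^k)=\ind_{U_k}H_m(u_j)$, so the density $dH_m(u_j^k)/d\mu\ge f_j$ holds $\mu$-a.e.\ on $U_k$, and the two mixed measures coincide on $U_k$. Granting the bounded case applied to $u_1^k,\ldots,u_m^k$, I obtain on $U_k$
$$\ind_{U_k}\,\omega_{u_1}\wedge\cdots\wedge\omega_{u_m}\wedge\omega^{n-m}\ \ge\ \ind_{U_k}(f_1\cdots f_m)^{1/m}\mu.$$
As $k\uparrow\infty$ the sets $U_k$ increase to $X\setminus\bigcup_j\{u_j=-\infty\}$, whose complement is $m$-polar and hence $\mu$-negligible by hypothesis; monotone convergence then upgrades this to the full inequality on $X$.

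It remains to treat bounded $u_1,\ldots,u_m$, which is the crux. The strategy is to approximate each $u_j$ by a decreasing sequence of smooth (or continuous) $m$-sh functions from the smoothing theorem, to invoke the smooth case, and to pass to the limit using the weak continuity of mixed Hessian measures along decreasing sequences of bounded $m$-sh functions (Bedford--Taylor convergence), together with the stability of measure inequalities under weak limits (test against non-negative continuous functions). The difficulty --- and precisely where the global method here replaces the local Dirichlet-solution argument of \cite{Dinew07} --- is that a naive decreasing smoothing need not preserve the lower bounds $H_m(u_j)\ge f_j\mu$, so one cannot simply read off a good limit on the right-hand side. I expect this to be the main obstacle. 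I would address it globally by keeping the potentials fixed and reducing the density inequality to the two-variable ``swap'' inequality $\omega_u^{m-1}\wedge\omega_v\wedge\omega^{n-m}\ge H_m(u)^{(m-1)/m}H_m(v)^{1/m}$ for bounded $u,v$, which can then be iterated to recover the full mixed bound. Establishing this swap inequality for bounded potentials --- via the capacity estimates and weak-convergence results collected earlier, using that $\mu$ charges no $m$-polar set --- is the technical heart, after which the statement follows from the reductions above.
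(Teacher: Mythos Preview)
Your outline is structurally sound up to the bounded case: the G\r{a}rding pointwise inequality handles the smooth situation, and the reduction from $\EcXo$ to bounded potentials via canonical cut-offs $\max(u_j,-k)$ together with plurifine locality and the hypothesis that $\mu$ puts no mass on $m$-polar sets is exactly what the paper does. You also correctly isolate the real difficulty: a decreasing smooth regularization of a bounded $u$ does \emph{not} come with any control of the form $H_m(u^{(j)})\ge f\mu$, so one cannot simply apply G\r{a}rding at level $j$ and pass to the weak limit on the right-hand side.

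The gap is that your proposed remedy does not actually bypass this obstacle. Reducing to the two-function ``swap'' inequality $\omega_u^{k}\wedge\omega_v^{m-k}\wedge\omega^{n-m}\ge H_m(u)^{k/m}H_m(v)^{(m-k)/m}$ is harmless (the paper also works with two functions), but proving it for bounded $u,v$ is the \emph{same} problem: if you approximate $u$ by smooth $u_j\searrow u$ you only know $H_m(u_j)\rightharpoonup H_m(u)$ weakly, and weak convergence of two sequences of measures does not imply weak convergence of their pointwise geometric means, so nothing prevents mass from escaping on the right. The phrase ``via the capacity estimates and weak-convergence results collected earlier'' hides the missing idea. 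What the paper does instead is to replace naive smoothing by \emph{solving auxiliary Hessian equations}: first (Lemma~\ref{lem: mixed equality}) it treats the case where $H_m(u)$ and $H_m(v)$ are given by \emph{equalities} $fH_m(\varphi)+h_1H_m(\psi_1)$ against bounded potentials, building approximants as solutions of $H_m(u_j)=e^{u_j-u}[\,\cdot\,]$ so that G\r{a}rding applies exactly at each level and the $u_j$ converge in $\Capm$ (Lemma~\ref{lem: convergence}); then (Theorem~\ref{thm: mixed hessian inequality}) it upgrades from equality to the inequality $H_m(u)\ge f\mu$ by solving $H_m(u_j)=e^{j(u_j-u)}[(1-\delta)f\mu+\delta H_m(u)]$, which forces $u\le u_j\le u+j^{-1}\log\delta^{-1}$ and hence uniform convergence. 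This exponential-twist / ``$\beta$-convergence'' device is the technical heart you are missing; without it your bounded-case step is not a proof but a restatement of the goal.
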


Having these inequalities in hand it is straightforward to generalize the uniqueness theorem from \cite{Dinew09} thus establishing a very general uniqueness result for the solutions to the complex Hessian equation living in the class 
$\EcXo$.  

\begin{theorem}\label{theorem 2}[Theorem \ref{thm: uniqueness}]
Let $u,v$ be functions in $\EcXo$  such that 
$$H_m(u)=H_m(v).$$
Then $u-v$ is a constant.
\end{theorem}

The note is organized as follows. First we briefly recall the notions and tools that we shall use later on. Then in Section \ref{Ineq} we establish the inequality for mixed Hessian measures. In Section \ref{Uniq} we prove the uniqueness theorem. Finally in Section \ref{example} we give an explicit example that the mixed Hessian inequality fails outside the class $\EcXo$.

\section{Preliminaries}
We recall basic facts on $m$-subharmonic functions which will be used  later. We first  consider the local setting. 
\subsection{$m$-subharmonic functions}
Let $M$ be a non-compact K\"ahler manifold of dimension $n$ and  $\omega$ be a K\"ahler form. Fix an integer $m$ such that $1\leq m\leq n$. 
\begin{defi}\label{def: m-sh smooth}
A smooth function $u$ is called $m$-subharmonic ($m$-sh for short) on $M$ if the following holds in the classical sense
$$
(dd^c u)^k \wedge \omega^{n-k} \geq 0 , \ \forall k=1, \cdots, m .
$$ 
Equivalently, $u$ is $m$-sh if the vector of eigenvalues $\lambda(x) \in \R^n$ of $dd^c u$ with respect to $\omega$ satisfies
$$
S_k(\lambda(x)) \geq 0 , \ \forall x\in M,\ \forall k=1, \cdots, m .
$$ 
Yet another characterization of $m$-subharmonicty can be obtained via the inequalities 
$$
\forall \vep>0\ \ (dd^c u+ \vep \omega)^m \wedge \omega^{n-m} \geq 0. 
$$ 
\end{defi}

From G{\aa}rding's inequality \cite{Ga59} we easily get the smooth version of the mixed Hessian inequalities:
\begin{lem}
\label{lem: Garding}
Let $u_1,...,u_m$ be smooth $m$-sh functions on $M$ and $f_1,...,f_m$ be smooth nonnegative functions such that 
$$
(dd^c u_j)^m \wedge \omega^{n-m} =f_j\omega^n, \ \forall j=1,...,m.
$$
Then the following mixed Hessian inequality holds
$$
dd^c u_1 \wedge ...\wedge dd^c u_m \wedge \omega^{n-m}
\geq (f_1...f_m)^{1/m}\omega^n .
$$
\end{lem}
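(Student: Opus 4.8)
The plan is to reduce the claimed inequality to a pointwise statement in linear algebra and then invoke G{\aa}rding's theory of hyperbolic polynomials. Since both sides are smooth $(n,n)$-forms, it suffices to check, for each fixed $x_0 \in M$, that the coefficient of $dd^c u_1 \wedge \cdots \wedge dd^c u_m \wedge \omega^{n-m}$ relative to $\omega^n$ dominates $(f_1\cdots f_m)^{1/m}$ at $x_0$. First I would fix an $\omega$-unitary frame at $x_0$, in which each $dd^c u_j$ is represented by a Hermitian matrix $A_j$, and record the elementary identity
\[
\alpha_1 \wedge \cdots \wedge \alpha_m \wedge \omega^{n-m} = \frac{m!(n-m)!}{n!}\,\Sigma_m(A_1,\dots,A_m)\,\omega^n,
\]
where the $\alpha_i$ are $(1,1)$-forms with matrices $A_i$ and $\Sigma_m$ denotes the full symmetric multilinear polarization of the $m$-th elementary symmetric function $S_m$ of eigenvalues, viewed as a polynomial on the space of Hermitian matrices. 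Specializing all $\alpha_i$ to $dd^c u_j$ turns the hypothesis $(dd^c u_j)^m \wedge \omega^{n-m} = f_j \omega^n$ into $\frac{m!(n-m)!}{n!}\,S_m(\lambda(A_j)) = f_j$.

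Next I would appeal to G{\aa}rding's inequality \cite{Ga59}. The polynomial $A \mapsto S_m(\lambda(A))$ is hyperbolic with respect to the identity, and its G{\aa}rding cone is exactly the set of $m$-positive Hermitian matrices, i.e. those with eigenvalue vector in $\Gamma_m = \{S_1>0,\dots,S_m>0\}$; by Definition \ref{def: m-sh smooth} each $A_j$ lies in the closure $\overline{\Gamma_m}$. G{\aa}rding's theorem then yields, for elements of the open cone,
\[
\Sigma_m(A_1,\dots,A_m) \geq S_m(\lambda(A_1))^{1/m}\cdots S_m(\lambda(A_m))^{1/m}.
\]
Because the $A_j$ may be degenerate, I would first apply this to $A_j + \vep I \in \Gamma_m$ and then let $\vep \to 0$, both sides being continuous in the matrix entries.

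Finally, combining the polarization identity with the last inequality and substituting $S_m(\lambda(A_j)) = \frac{n!}{m!(n-m)!} f_j$, the constants $\frac{m!(n-m)!}{n!}$ cancel against the product $\prod_{j=1}^m \big(\frac{n!}{m!(n-m)!}\big)^{1/m}$, leaving exactly $dd^c u_1 \wedge \cdots \wedge dd^c u_m \wedge \omega^{n-m} \geq (f_1\cdots f_m)^{1/m}\,\omega^n$ at $x_0$, hence everywhere. The only genuinely delicate point is the first step: verifying that the mixed wedge product is, up to the stated normalization, precisely the G{\aa}rding polarization of $S_m$. In particular one must note that the non-commutativity of the $A_j$ is harmless, since G{\aa}rding's inequality holds for arbitrary elements of the cone and requires no simultaneous diagonalization; tracking the normalization constant correctly is the main bookkeeping hazard.
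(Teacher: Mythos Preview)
Your proposal is correct and is precisely the argument the paper has in mind: the lemma is stated without proof, preceded only by the sentence ``From G{\aa}rding's inequality \cite{Ga59} we easily get the smooth version of the mixed Hessian inequalities,'' and your pointwise reduction via an $\omega$-unitary frame together with the polarization identity and G{\aa}rding's inequality for the hyperbolic polynomial $S_m$ is exactly the standard unpacking of that sentence. The approximation $A_j+\vep I$ to handle the closed cone and the cancellation of the combinatorial constant $\tfrac{m!(n-m)!}{n!}$ are both handled correctly.
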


\begin{defi}\label{def: m-sh non-smooth}
Let $u$ be a locally integrable, upper semicontinuous function on $M$.
Then $u$ is called $m$-sh on $M$ if the following two conditions are satisfied:
\begin{enumerate}[i)] 
\item in the weak sense of currents  $$dd^c u\wedge dd^c \f_1\wedge \cdots \wedge dd^c \f_{m-1} \wedge \omega^{n-m}\geq 0, $$
for any collection  $\f_1\cdots\f_{m-1}$ of smooth $m$-sh functions;
\item  if $v$ is another function satisfying the above inequalities and $v=u$ almost everywhere on $M$ then $u\leq v$. 
\end{enumerate}
\end{defi}

Note that two $m$-subharmonic functions are the same if they are equal almost everywhere on $M$. Observe also that by G{\aa}rding's inequality Definition \ref{def: m-sh smooth} and Definition \ref{def: m-sh non-smooth} are equivalent for smooth functions. The class of $m$-sh functions on $M$ (with respect to $\omega$) is denoted by 
$\SH_m(M)$. Note that this class depends heavily on the metric $\omega$ which makes the regularization process of $m$-sh functions very complicated. More precisely, it is not clear whether the convolution of a $m$-sh functions (when $\omega$ is not flat) with a smooth kernel is $m$-sh. Nevertheless any $m$-sh function can be approximated by smooth $m$-sh functions (see \cite{Pl13}, \cite{LN14}).

The smoothing property allows us to define the domain of definition of the complex Hessian operator:
\begin{defi}
A $m$-sh function $u$ belongs to the domain of definition of the Hessian operator $\Dc_m(M)$ if there exists a regular Borel measure
$\mu$ such that for any sequence $(u_j)$ of smooth $m$-sh functions decreasing to $u$ then $(dd^c u_j)^m \wedge \omega^{n-m}$ converges weakly to $\mu$. 
\end{defi}

We shall need the following result of B\l ocki \cite{Bl05}:
\begin{lem}\cite{Bl05}
\label{lem: W12 Blocki}
Assume that $M$ is an open subset of $\C^n$ and $\omega=dd^c |z|^2$ is the standard K\"ahler form in $\C^n$. If $m=2$ and $u$ is $m$-sh on $M$ then $u\in \Dc_m(M)$ if and only if $u\in W^{1,2}_{\rm loc}(M)$.
\end{lem}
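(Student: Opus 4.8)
The plan is to reduce everything to a local integration-by-parts identity that is special to the case $m=2$, and to read off both implications from it. I would work on a fixed ball $B\Subset M$ and, for a $2$-sh function $u$, fix a sequence of smooth $2$-sh functions $u_j\searrow u$ (such sequences exist by the smoothing results quoted above). The central computation is the following: for any $\chi\in C_c^\infty(B)$, two integrations by parts, using that $dd^c u_j\wedge\omega^{n-2}$ and $dd^c\chi\wedge\omega^{n-2}$ are $d$-closed, give
$$
\int_B \chi\,(dd^c u_j)^2\wedge\omega^{n-2} = -\int_B du_j\wedge d^c u_j\wedge dd^c\chi\wedge\omega^{n-2}.
$$
The whole point of taking $m=2$ is that after moving one $dd^c$ onto $\chi$ and removing the other from $u_j$ entirely, only the first-order quantity $du_j\wedge d^c u_j$ survives; for $m\ge 3$ uncontrolled higher powers of $dd^c u_j$ would remain, which is why no purely $W^{1,2}$ statement can be expected there.

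For the necessity direction, I assume $u\in\Dc_2(B)$, so the measures $(dd^c u_j)^2\wedge\omega^{n-2}$ converge weakly and in particular have uniformly bounded mass on compact subsets. I would choose $\chi\ge 0$ with compact support in $B$ and with $-dd^c\chi\wedge\omega^{n-2}\ge\omega^{n-1}$ on a prescribed compact $K$; since $du_j\wedge d^c u_j\wedge\omega^{n-2}\ge 0$ as a positive current, the displayed identity bounds $\int_K du_j\wedge d^c u_j\wedge\omega^{n-1}$ — i.e.\ the local Dirichlet energy, equivalent to the $W^{1,2}(K)$ seminorm — by the Hessian mass of $u_j$ on $\mathrm{supp}\,\chi$, together with the contribution of the region where $-dd^c\chi$ fails to be positive. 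Absorbing that contribution (a Chern--Levine--Nirenberg type bookkeeping, using that $u_j\searrow u\in L^1_{loc}$ so the $u_j$ are uniformly bounded in $L^1_{loc}$) yields $\sup_j\|u_j\|_{W^{1,2}(K)}<\infty$. Weak compactness in $W^{1,2}$ together with $u_j\to u$ in $L^2_{loc}$ then places $u\in W^{1,2}_{loc}(B)$.

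For the sufficiency direction, I assume $u\in W^{1,2}_{loc}$ and $2$-sh, and let $u_j\searrow u$ be an arbitrary sequence of smooth $2$-sh functions. I want to show $(dd^c u_j)^2\wedge\omega^{n-2}$ converges to a measure depending only on $u$. By the identity it suffices to prove $du_j\wedge d^c u_j\to du\wedge d^c u$ against any smooth form, i.e.\ that $\nabla u_j\to\nabla u$ strongly in $L^2_{loc}$. This is the technical heart. The plan is to prove energy convergence for the decreasing sequence: from $u_j\to u$ in $L^2_{loc}$ and the elementary identity (valid here since we are in $\C^n$ with the flat metric)
$$
\int_B \chi\,|\nabla u_j|^2 = \tfrac12\int_B u_j^2\,\Delta\chi - \int_B \chi\,u_j\,\Delta u_j ,
$$
one passes to the limit — the first term by $L^2$ convergence, the second by a monotonicity argument for the decreasing family $u_j$ integrated against the weakly convergent positive measures $\Delta u_j$ — to obtain $\int_B\chi|\nabla u_j|^2\to\int_B\chi|\nabla u|^2$. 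Combined with weak $L^2$ convergence this forces strong $L^2_{loc}$ convergence of the gradients. Feeding this back into the identity shows that the weak limit of $(dd^c u_j)^2\wedge\omega^{n-2}$, tested against $\chi$, equals $-\int_B du\wedge d^c u\wedge dd^c\chi\wedge\omega^{n-2}$, manifestly independent of the approximating sequence, hence $u\in\Dc_2(B)$.

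The main obstacle I anticipate is precisely the strong $L^2$ convergence of gradients in the sufficiency direction: the quantity $du_j\wedge d^c u_j$ is quadratic in the first derivatives, so weak $L^2$ convergence of $\nabla u_j$ does not suffice, and one genuinely needs the energy-convergence lemma for decreasing sequences, including the delicate step of passing to the limit in $\int_B\chi\,u_j\,\Delta u_j$ where a decreasing, possibly unbounded, family is paired with a varying positive measure. A secondary point is the localization in the necessity direction: the test function $\chi$ cannot be simultaneously nonnegative, compactly supported, and $\omega$-superharmonic, so the negative contribution of $dd^c\chi$ off $K$ must be controlled by a Chern--Levine--Nirenberg estimate or a covering/iteration argument rather than simply discarded.
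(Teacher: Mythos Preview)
The paper does not provide its own proof of this lemma; it is quoted verbatim from B{\l}ocki \cite{Bl05} and used only as a black box in Section~\ref{example}. So there is no ``paper's proof'' to compare against.

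That said, your proposal is essentially B{\l}ocki's argument. The identity
\[
\int_B \chi\,(dd^c u_j)^2\wedge\omega^{n-2} \;=\; -\int_B du_j\wedge d^c u_j\wedge dd^c\chi\wedge\omega^{n-2}
\]
is exactly the device he uses, and your observation that this collapses the problem to a first-order quantity precisely when $m=2$ is the point of the lemma. Both implications you sketch are correct in outline. Two remarks on execution. First, for the localization in the necessity direction, rather than a covering/iteration it is cleaner to take $\chi=\eta^2$ with $\eta$ a cutoff; then $dd^c\chi = 2\eta\,dd^c\eta + 2\,d\eta\wedge d^c\eta$, and the troublesome cross term $\int du_j\wedge d^c u_j\wedge d\eta\wedge d^c\eta\wedge\omega^{n-2}$ is handled by Cauchy--Schwarz and absorbed into the left-hand side --- this avoids any genuine CLN bookkeeping. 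Second, for the sufficiency direction, the strong $L^2_{\rm loc}$ convergence of gradients for a decreasing sequence of subharmonic functions in $W^{1,2}_{\rm loc}$ is a standard fact (energy is continuous along monotone sequences), so the step you flag as ``delicate'' is in fact routine once isolated; your identity $\int\chi|\nabla u_j|^2 = \tfrac12\int u_j^2\Delta\chi - \int\chi u_j\Delta u_j$ and monotonicity argument are one correct way to see it.
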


\subsection{$\omega$-$m$-subharmonic functions}
Let $(X,\omega)$ be a compact K\"ahler manifold of dimension $n$ and $m$ be an integer such that $1\leq m\leq n$. 
\begin{defi}
A function $u: X\rightarrow \R\cup\{-\infty\}$ is called $\omega$-$m$-subharmonic ($\omega$-$m$-sh for short) if in any local chart $\Omega$ of $X$, the function 
$\rho + u$ is $m$-sh, where $\rho$ is a local potential of $\omega$. 
\end{defi}
Observe that a smooth function $u$ is $\omega$-$m$-sh if and only if 
$$
(\omega+dd^c u)^k \wedge \omega^{n-k} \geq 0 ,\ \forall k=1,...,m,
$$
or equivalently iff
$$
\ \forall 
\vep >0\ \ ((1+\vep)\omega+dd^c u)^m \wedge \omega^{n-m} \geq 0.
$$

We let $\SH_m(X,\omega)$ denote the class of $\omega$-$m$-sh functions 
on $X$. It follows from \cite{LN14} that for any $u\in \SHXo$ there exists a decreasing  sequence  of smooth $\omega$-$m$-sh functions on $X$ which converges to $u$. Following the classical pluripotential method of Bedford and Taylor \cite{BT76} one can then define the complex Hessian operator for any bounded $\omega$-$m$-sh function:
$$
H_m(u):=(\omega +dd^c u)^m\wedge \omega^{n-m},
$$
which is a non-negative (regular) Borel measure on $X$. The complex Hessian operator is local in the plurifine topology which in particular implies that the sequence 
$$
\ind_{\{u>-j\}} H_m(\max(u,-j)) 
$$
is non-decreasing, for any $u\in \SHXo$. Moreover given any $\omega$-$m$-sh function $u$
$$\forall j\in \mathbb N,\ \int_X\ind_{\{u>-j\}} H_m(\max(u,-j))\leq\int_X\omega^n.$$

We then define the class $\EcXo$  as the set of these of $\omega$-$m$-sh functions for  which 
$$
\lim_{j\rightarrow\infty}\int_X\ind_{\{u>-j\}} H_m(\max(u,-j))=\int_X\omega^n,
$$
(see \cite{LN14}, \cite{GZ07}). For any $u\in\EcXo$ we define 
$$H_m(u):= \lim_{j\rightarrow\infty}\ind_{\{u>-j\}} H_m(\max(u,-j)).$$
 We recall the following result:
\begin{thm}\cite{LN14}
Let $\mu$ be a non-negative Radon measure on $X$ vanishing on all $m$-polar sets. Assume that $\mu(X)=\int_X \omega^n$. Then there exists 
$u\in \EcXo$ such that 
$$
H_m(u)=\mu.
$$
\end{thm}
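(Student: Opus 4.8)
The plan is to adapt the variational method of Berman--Boucksom--Guedj--Zeriahi \cite{BBGZ13} to the Hessian setting, solving first the case of measures of finite energy and then reaching the general case by truncation. Write $\omega_u:=\omega+dd^cu$ and normalise $\int_X\omega^n=1$, so that $\mu$ is a probability measure. On the finite-energy subclass $\Ect\subset\EcXo$ I would introduce the Monge--Amp\`ere--Hessian energy
$$
E(u)=\frac{1}{m+1}\sum_{k=0}^{m}\int_X u\,\omega_u^{k}\wedge\omega^{n-k},
$$
whose Gateaux differential is $dE(u)=H_m(u)$ (a telescoping integration by parts, using $dd^cu=\omega_u-\omega$, gives exactly the top term $\int\dot u\,\omega_u^m\wedge\omega^{n-m}$). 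Thus a critical point of $\mathcal F_\mu(u):=E(u)-\int_X u\,d\mu$ is precisely a solution of $H_m(u)=\mu$.

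\emph{First}, assume $\mu$ has finite energy, i.e.\ $\Ect\subset L^1(\mu)$, so that $\mathcal F_\mu$ is not identically $-\infty$. Normalising by $\sup_X u=0$, I would show that $\mathcal F_\mu$ is upper semi-continuous and coercive for the $L^1$-topology: upper semicontinuity of $E$ is standard, while the linear term is controlled using that $\mu$ charges no $m$-polar set, which forces $u\mapsto\int_X(-u)\,d\mu$ to behave continuously along energy-bounded sequences. Compactness of $\{u\in\Ect:\sup_X u=0,\ E(u)\ge -C\}$ in $L^1$ then yields a maximiser $\phi$.

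\emph{Second}, and this is the main obstacle, I must show the maximiser $\phi$ actually solves $H_m(\phi)=\mu$. I would test $\mathcal F_\mu$ against bounded $m$-sh perturbations and use the differentiability of $E$ together with the projection/comparison argument of \cite{BBGZ13}: for a continuous test function $w$ one compares $\phi$ with the envelope $P(\phi+tw)$ and lets $t\to0^+$, matching the one-sided derivatives. The difficulty is that $E$ is only concave (not smooth) and the admissible class is not a vector space, so one needs the Hessian analogues of the orthogonality relation and of the domination principle; the hypothesis that $\mu$ vanishes on $m$-polar sets enters decisively here, both to identify the contact set up to $\mu$-null sets and to discard error terms carried by polar sets.

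\emph{Finally}, I would remove the finite-energy assumption. By a Cegrell-type decomposition, a measure vanishing on $m$-polar sets can be written $\mu=f\,H_m(\rho)$ with $\rho$ a bounded $\omega$-$m$-sh function and $0\le f\in L^1(H_m(\rho))$. Truncating $f_j:=\min(f,j)$ and rescaling to $\mu_j:=c_j f_j\,H_m(\rho)$ with $c_j\downarrow 1$ chosen so that $\mu_j(X)=1$, each $\mu_j$ has finite energy, so the previous steps give $u_j\in\Ect$ with $H_m(u_j)=\mu_j$. A uniform energy (or capacity) estimate, obtained from the comparison principle and the mixed Hessian inequalities, shows $(u_j)$ is relatively compact; passing to the limit and invoking continuity of $H_m$ along energy-bounded sequences produces $u\in\EcXo$ with $H_m(u)=\mu$. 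The crucial absence of mass loss in the limit is again guaranteed precisely because $\mu$ does not charge $m$-polar sets.
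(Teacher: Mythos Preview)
The paper does not actually prove this statement: it is quoted in the preliminaries with a citation to \cite{LN14}, and no argument is given here. The surrounding text makes this explicit --- the very next result (Theorem~\ref{thm: exponential}) is introduced by ``the variational method used in \cite{LN14} (which originated from \cite{BBGZ13}) can be applied in the same way'' and its proof is ``left to the reader as an easy exercise.'' So there is nothing in the present paper to compare your proposal against.

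That said, your outline is precisely the variational scheme of \cite{BBGZ13} transported to the Hessian setting, which is what \cite{LN14} does and what the authors have in mind. The energy functional, the projection/envelope argument to extract the Euler--Lagrange equation from a maximiser, and the Cegrell-type decomposition $\mu=f\,H_m(\rho)$ with $\rho$ bounded are all the right ingredients. One caution: in your last step you invoke ``the mixed Hessian inequalities'' to get compactness of the approximating sequence $(u_j)$. In the logic of the present paper those inequalities are established only in Section~\ref{Ineq}, \emph{after} this theorem has already been used, so you should not appeal to them here; the compactness in \cite{LN14} comes instead from energy/capacity estimates and the comparison principle alone. Apart from that potential circularity, your sketch matches the intended proof.
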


Finally by $\Ect$ we denote the subset of $\EcXo$ of all $u$ integrable with respect to their own Hessian measure $H_m(u)$.

Note that when $\mu=f\omega^n$ for some smooth positive function $f$ then $u$ is also smooth (see \cite{DK12}). The variational method used in \cite{LN14} (which originated from \cite{BBGZ13}) can be applied in the same way to get the following result:

\begin{thm}\label{thm: exponential}
Let $\mu$ be a non-negative Radon measure on $X$ vanishing on all $m$-polar sets. Then for any $\vep>0$ there exists $u\in \Ec^1(X,\omega,m)$ such that
$$
H_m(u)=e^{\vep u}\mu.
$$
\end{thm}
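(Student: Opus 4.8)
The plan is to solve the equation by the direct method of the calculus of variations, adapting to the Hessian operator the scheme of \cite{BBGZ13} used in \cite{LN14}. Observe first that $H_m(u)=e^{\vep u}\mu$ is formally the Euler--Lagrange equation of
$$
F(u):=E(u)-\frac{1}{\vep}\int_X e^{\vep u}\,d\mu,\qquad E(u):=\frac{1}{m+1}\sum_{j=0}^{m}\int_X u\,(\omega+dd^c u)^j\wedge\omega^{n-j},
$$
where $E$ is the Monge--Amp\`ere-type energy, finite on $\Ect$, with Gateaux differential $dE_u(v)=\int_X v\,H_m(u)$. Differentiating the exponential term produces $\int_X v\,e^{\vep u}\,d\mu$, so a critical point of $F$ satisfies $\int_X v\,(H_m(u)-e^{\vep u}\mu)=0$ for all bounded $v$, i.e.\ exactly $H_m(u)=e^{\vep u}\mu$. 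I would begin by recording the properties of $E$ that the argument needs: concavity, monotonicity, and upper semicontinuity along decreasing sequences and under $L^1(X)$-convergence on sets of bounded energy; I would also note that $e^{\vep u}\mu$ still vanishes on $m$-polar sets, so it is an admissible right-hand side.

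Next I would establish existence of a maximizer. Since $E$ is concave and $t\mapsto e^{\vep t}$ convex, $F$ is concave. The exponential term is what makes the functional coercive and fixes the additive constant: from $E(u+c)=E(u)+c\int_X\omega^n$ and $\frac{1}{\vep}\int_X e^{\vep(u+c)}\,d\mu=\frac{e^{\vep c}}{\vep}\int_X e^{\vep u}\,d\mu$ one sees $F(u+c)\to-\infty$ as $c\to\pm\infty$, the exponential dominating for $c\to+\infty$ and the linear term for $c\to-\infty$. Using the standard comparison between the energy $E$ and the $L^1(X)$-norm on $\Ect$ (as in \cite{BBGZ13,LN14}), the supremum of $F$ is finite and attained: since $F(0)=-\vep^{-1}\mu(X)>-\infty$, a maximizing sequence has bounded supremum and bounded energy, hence is relatively compact in $L^1(X)$, and upper semicontinuity of $F$ promotes a limit point to a maximizer $u_0\in\Ect$. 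In contrast with the equation $H_m(u)=\mu$, no condition on the total mass of $\mu$ is required here, the exponential selecting the constant automatically.

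The heart of the matter is to show that the maximizer genuinely solves the equation, and not merely a variational inequality. Following Berman--Boucksom I would extend the functional to arbitrary bounded $w$ by
$$
\widetilde F(w):=E(P(w))-\frac{1}{\vep}\int_X e^{\vep w}\,d\mu,\qquad P(w):=\sup\{\,\p\in\SHXo\setdef \p\leq w\,\}.
$$
This agrees with $F$ on $\SHXo$, where $P(w)=w$. Because $w\geq P(w)$ forces $e^{\vep w}\geq e^{\vep P(w)}$, one checks $\widetilde F(w)\leq F(P(w))\leq F(u_0)=\widetilde F(u_0)$, so $u_0$ maximizes $\widetilde F$ over all bounded functions. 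Differentiating $\widetilde F(u_0+tv)$ at $t=0$ for bounded $v$, the exponential term is handled by dominated convergence, while the differentiability of $E\circ P$ (the projection theorem of \cite{BBGZ13}, in the Hessian form of \cite{LN14}) gives $\frac{d}{dt}\big|_{0}E(P(u_0+tv))=\int_X v\,H_m(u_0)$, the contact set $\{P(u_0)=u_0\}$ carrying the full mass of $H_m(u_0)$. Vanishing of the derivative for both signs of $v$ then yields $\int_X v\,H_m(u_0)=\int_X v\,e^{\vep u_0}\,d\mu$, that is $H_m(u_0)=e^{\vep u_0}\mu$. I expect this differentiability of the composed energy $E\circ P$, together with the verification that the maximizer has finite energy and that $H_m(u_0)$ charges no mass on the $m$-polar set $\{u_0=-\infty\}$ (where the hypothesis on $\mu$ enters), to be the main obstacle. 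As an alternative avoiding the projection theorem, one can run a fixed-point argument: for each $v$ solve $H_m(w)=\big(\int_X\omega^n\big/\int_X e^{\vep v}\,d\mu\big)e^{\vep v}\mu$ by the existence theorem of \cite{LN14} (the total mass now being correct) with uniqueness from Theorem \ref{theorem 2}, find a fixed point $u_0$, and finally translate by $t_0=\vep^{-1}\log\big(\int_X\omega^n\big/\int_X e^{\vep u_0}\,d\mu\big)$, using that $H_m$ is translation invariant whereas $e^{\vep u}\mu$ is not.
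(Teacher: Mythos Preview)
Your main variational argument is precisely what the paper intends: the authors explicitly say that the variational method of \cite{LN14} (originating in \cite{BBGZ13}) ``can be applied in the same way'' and leave the details as an exercise. Your outline---setting up the energy functional $F(u)=E(u)-\vep^{-1}\int_X e^{\vep u}\,d\mu$, exploiting concavity and the coercivity coming from the exponential term to produce a maximizer in $\Ect$, and then deriving the Euler--Lagrange equation via the projection $P$ and the differentiability of $E\circ P$---is the standard and correct route, and matches the paper's intended proof.

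One caution about your alternative fixed-point argument: you invoke ``uniqueness from Theorem~\ref{theorem 2}'' to make the map $v\mapsto w$ well defined. In this paper that is circular. Theorem~\ref{thm: exponential} is used (in Step~2 of Lemma~\ref{lem: mixed equality} and in the proof of Theorem~\ref{thm: mixed hessian inequality}) to establish the mixed Hessian inequality, which in turn is the key ingredient in the proof of Theorem~\ref{theorem 2}. So the uniqueness result you would appeal to sits logically \emph{downstream} of the statement you are proving. The variational proof avoids this issue entirely, which is another reason to prefer it here; if you want to keep the fixed-point alternative, you would have to supply an independent uniqueness argument (e.g.\ a direct comparison-principle proof in the bounded case) or use a Schauder-type argument that does not require the map to be single valued.
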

The proof of Theorem \ref{thm: exponential} is left to the reader as an easy exercise. We will need this result in the next section. We remark that the solution $u$ is unique- this follows from the domination principle in $\EcXo$ which is an easy consequence of Theorem \ref{theorem 2} as shown in \cite{BL12} (when $m=n$). 

Next we shall need the notion of capacity associated to the Hessian operator and convergence with respect to it:
\begin{defi}
The $m$-capacity of a Borel subset of $X$ is defined as 
$$
\Capm(E) : =\sup\left\{ \int_E H_m(u) \setdef u\in \SHXo , \ -1\leq u\leq 0\right\}.
$$
\end{defi}
\begin{defi}
A sequence $(u_j)$ of  converges in $\Capm$ to $u$ if for any $\vep>0$
$$
\lim_{j\to +\infty} \Capm(|u_j-u|>\vep) =0.
$$
\end{defi}
Exactly as it the plurisubharmonic setting we have convergence in capacity for monotonely decreasing sequences and quasi-continuity of $\omega$-$m$-sh functions:
\begin{prop}
If $(u_j)\subset \SHXo$ decreases to $u \not \equiv -\infty$ then 
$u_j$ converges to $u$ in $\Capm$.
\end{prop}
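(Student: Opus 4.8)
The plan is to reduce to bounded functions and then to prove a single uniform capacity estimate by combining a Chebyshev inequality with a Bedford--Taylor type integration by parts. Throughout I may assume $u_j\le 0$ (subtracting a constant does not change the sets whose capacity we control), so that $u\le\cdots\le u_j\le 0$ and $|u_j-u|=u_j-u$. Fix $\vep>0$; since $u_j\ge u$ the set to control is $\{u_j>u+\vep\}$.

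First I would truncate. Put $u^k:=\max(u,-k)$ and $u_j^k:=\max(u_j,-k)$, which are bounded $\omega$-$m$-sh functions with $u_j^k\downarrow u^k$. On the open set $\{u>-k\}$ one has $u^k=u$ and $u_j^k=u_j$ (because $u_j\ge u>-k$ there), so $\{u_j>u+\vep\}\cap\{u>-k\}\subset\{u_j^k>u^k+\vep\}$, while on the complement $\{u\le -k\}$ I bound crudely by the whole set. Hence
\[
\Capm(\{u_j>u+\vep\})\le \Capm(\{u_j^k>u^k+\vep\})+\Capm(\{u\le -k\}).
\]
Since $u\not\equiv-\infty$, the set $\{u=-\infty\}$ is $m$-polar and $\{u\le -k\}\downarrow\{u=-\infty\}$; the standard fact that $\Capm(\{u<-k\})\to 0$ for $u\in\SHXo$ (see \cite{LN14}) then makes the second term arbitrarily small for $k$ large. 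Thus it suffices to prove the statement for a bounded decreasing sequence, and I now assume $-k\le u\le u_j\le 0$.

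Next, for the bounded case, I would use the very definition of the capacity together with the Chebyshev inequality. For any $w\in\SHXo$ with $-1\le w\le 0$, on $\{u_j>u+\vep\}$ we have $1\le (u_j-u)/\vep$, whence
\[
\int_{\{u_j>u+\vep\}}H_m(w)\le \frac1\vep\int_X (u_j-u)\,H_m(w).
\]
Taking the supremum over all admissible $w$ gives
\[
\Capm(\{u_j>u+\vep\})\le \frac1\vep\,\sup_{-1\le w\le 0}\int_X (u_j-u)\,H_m(w),
\]
so everything reduces to proving that this supremum tends to $0$ as $j\to\infty$.

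The main obstacle is precisely this last uniform estimate: for a \emph{fixed} $w$ the integral $\int_X(u_j-u)H_m(w)$ tends to $0$ by monotone convergence (as $u_j-u\downarrow 0$ against the fixed finite measure $H_m(w)$), but one needs smallness that is \emph{uniform} in $w$. To obtain it I would run the Bedford--Taylor integration-by-parts scheme adapted to the $\omega$-$m$-sh setting. Writing $g:=u_j-u\ge 0$ and $H_m(w)-\omega^n=dd^c w\wedge T\wedge\omega^{n-m}$ with $T:=\sum_{i=0}^{m-1}(\omega+dd^c w)^i\wedge\omega^{m-1-i}$, one integrates by parts to move the derivatives onto $g$ and applies the Cauchy--Schwarz inequality for the positive form $T\wedge\omega^{n-m}$ iteratively, inducting on $m$. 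The factors carrying $w$ are controlled by the cohomological masses $\int_X(\omega+dd^c w)^i\wedge\omega^{n-i}=\int_X\omega^n$, which are independent of $w$, while the remaining factor is a power of $\int_X g\,\omega^n$. This yields a bound of the form $\int_X(u_j-u)H_m(w)\le C\bigl(\int_X(u_j-u)\,\omega^n\bigr)^{\theta}$ with $C$ and $\theta>0$ depending only on $m$, $n$ and $\int_X\omega^n$. Since $\int_X(u_j-u)\,\omega^n\to 0$ by monotone convergence, the supremum over $w$ tends to $0$ and the proof is complete. The genuinely delicate point to verify is that the $\omega$-twist does not spoil these manipulations: unlike in the local $m$-sh case, sums and scalings of $\omega$-$m$-sh functions need not be $\omega$-$m$-sh, so the integration-by-parts and Cauchy--Schwarz steps must be organized so as to involve only the positive currents $\omega+dd^c w$ and $\omega+dd^c u_j$ rather than their unweighted Hessians.
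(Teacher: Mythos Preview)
The paper does not actually prove this proposition: it is stated without proof, with the remark that it holds ``exactly as in the plurisubharmonic setting''. Your proposal is precisely the standard Bedford--Taylor argument alluded to, correctly adapted to the Hessian setting: truncate to reduce to bounded functions, apply a Chebyshev inequality against an arbitrary competitor $w$, then peel off the $\omega_w$ factors one at a time via integration by parts and Cauchy--Schwarz to obtain a bound uniform in $w$ that is controlled by a positive power of $\int_X(u_j-u)\,\omega^n$.

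Two small remarks. First, the induction is not literally on $m$ but on the number of $\omega_w$ factors present in the mixed form: after one Cauchy--Schwarz step, the ``gradient'' factor $\int_X d g\wedge d^c g\wedge\omega_w^{i}\wedge\omega^{n-1-i}$ is rewritten, via $dd^c g=\omega_{u_j}-\omega_u$, as $\int_X g\,(\omega_u-\omega_{u_j})\wedge\omega_w^{i}\wedge\omega^{n-1-i}$, which is bounded by $\int_X g\,\omega_u\wedge\omega_w^{i}\wedge\omega^{n-1-i}$; this has one fewer $\omega_w$ but picks up an $\omega_u$, so the constant $C$ in your final estimate depends on the truncation level $k$ (through the sup norms of $u,u_j$), which is harmless since $k$ is fixed before letting $j\to\infty$. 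Second, the positivity concern you raise at the end is real but resolvable: for $i\le m-1$ and $\beta\ge 0$ a $(1,1)$-form, the G{\aa}rding inequality applied to the $m$-positive forms $\beta,\omega_w,\ldots,\omega_w,\omega,\ldots,\omega$ gives $\beta\wedge\omega_w^{i}\wedge\omega^{n-1-i}\ge 0$, so $\omega_w^{i}\wedge\omega^{n-1-i}$ is a positive $(n-1,n-1)$-current and the Cauchy--Schwarz step is legitimate.
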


\begin{lem}
Any $\omega$-$m$-sh function $u$ is quasi-continuous, i.e. for any $\vep>0$ there exists an open subset $U$ such that $\Capm(U)<\vep$ and 
$u$ restricted on $X\setminus U$ is continuous. 
\end{lem}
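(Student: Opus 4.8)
The plan is to reduce quasi-continuity to the two results recorded immediately above, namely the existence of smooth decreasing approximants and convergence in capacity along decreasing sequences, via the capacity analogue of Egorov's theorem. First I would invoke the smoothing theorem from \cite{LN14}: since $u$ is $\omega$-$m$-sh (and not identically $-\infty$), there is a sequence $(u_j)$ of smooth, in particular continuous, $\omega$-$m$-sh functions decreasing to $u$. By the Proposition above, $u_j \to u$ in $\Capm$, that is, for every fixed $\delta>0$ one has $\Capm(\{u_j-u>\delta\})\to 0$ as $j\to\infty$; note that since the sequence decreases we have $u_j\geq u$, so $|u_j-u|=u_j-u$.

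The key structural observation is that each superlevel set $\{u_j-u>\delta\}$ is \emph{open}: $u_j$ is continuous and $u$ is upper semicontinuous, so $u_j-u$ is lower semicontinuous, and its superlevel sets are open. This is exactly what will allow the exceptional set $U$ to be taken open, as the statement requires.

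Now I would perform a subsequence extraction. Fix $\vep>0$. For each $k\geq 1$, using convergence in capacity with threshold $1/k$, choose indices $j_k$ increasing in $k$ such that
$$
\Capm\big(\{u_{j_k}-u>1/k\}\big) < \frac{\vep}{2^k}.
$$
Set $E_k=\{u_{j_k}-u>1/k\}$ and $U=\bigcup_{k\geq 1}E_k$. Each $E_k$ is open, hence $U$ is open, and by countable subadditivity of $\Capm$ we obtain $\Capm(U)\leq\sum_{k\geq 1}\vep/2^k=\vep$. On the complement $X\setminus U$ we have $0\leq u_{j_k}-u\leq 1/k$ for every $k$, so the continuous functions $u_{j_k}$ converge uniformly to $u$ on $X\setminus U$; therefore $u|_{X\setminus U}$ is continuous. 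In particular the polar set $\{u=-\infty\}$ is automatically contained in $U$, so no difficulty arises from the poles of $u$.

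The only ingredient beyond the quoted results is the countable subadditivity of $\Capm$, which I expect to be the main technical point of the argument: it is the standard property $\Capm(\bigcup_i E_i)\leq\sum_i\Capm(E_i)$, proven exactly as in the Bedford–Taylor theory by comparing the defining competitors, and I would either cite it or record it as a short preliminary lemma before this proof.
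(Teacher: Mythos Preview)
Your proof is correct and is precisely the standard Bedford--Taylor/Egorov-type argument that the paper has in mind. Note, however, that the paper does not actually supply a proof of this lemma: it is stated without proof immediately after the sentence ``Exactly as in the plurisubharmonic setting we have convergence in capacity for monotonely decreasing sequences and quasi-continuity of $\omega$-$m$-sh functions,'' so there is nothing in the paper to compare against beyond this implicit reference to the classical argument. Your write-up makes explicit the two ingredients the paper gestures at (smooth decreasing approximation from \cite{LN14} and the preceding Proposition on convergence in $\Capm$), and the countable subadditivity of $\Capm$ you flag is indeed immediate from the definition since each $H_m(v)$ is a Borel measure.
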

The following convergence result is an easy adaptation of \cite{Kol05}, Corollary 1.14:

\begin{lem}
\label{lem: cap convergence}
Let $(\varphi^1_j),...,(\varphi^m_j)$ be uniformly bounded sequence of functions
in $\SHXo\cap L^{\infty}(X)$ converging in $\Capm$ to $\varphi^1,...,\varphi^m$ respectively. Assume that $(f_j)$ is a uniformly bounded sequence of quasi continuous functions converging in 
$\Capm$ to $f$. Then we have the weak convergence of measures  
\begin{equation*}\label{prop: convergence in capacity 1}
f_j\omega_{\varphi_j^1}\wedge...\wedge\omega_{\varphi_j^m}\wedge
\omega^{n-m}\rightharpoonup
f\omega_{\varphi^1}\wedge...\wedge\omega_{\varphi^m}\wedge \omega^{n-m}.
\end{equation*}
\end{lem}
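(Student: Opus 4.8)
The plan is to prove the statement in two stages: first the unweighted convergence of the mixed Hessian measures (the case $f_j=f\equiv 1$), and then the incorporation of the quasi-continuous weights. Writing $\omega_\psi:=\omega+dd^c\psi$, set $\mu_j:=\omega_{\varphi^1_j}\wedge\cdots\wedge\omega_{\varphi^m_j}\wedge\omega^{n-m}$ and $\mu:=\omega_{\varphi^1}\wedge\cdots\wedge\omega_{\varphi^m}\wedge\omega^{n-m}$. To show $\mu_j\rightharpoonup\mu$ I would use the telescoping identity
$$
\mu_j-\mu=\sum_{i=1}^m \omega_{\varphi^1_j}\wedge\cdots\wedge\omega_{\varphi^{i-1}_j}\wedge dd^c(\varphi^i_j-\varphi^i)\wedge\omega_{\varphi^{i+1}}\wedge\cdots\wedge\omega_{\varphi^m}\wedge\omega^{n-m},
$$
where I have used $\omega_{\varphi^i_j}-\omega_{\varphi^i}=dd^c(\varphi^i_j-\varphi^i)$. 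Since the masses $\mu_j(X)$ are all bounded by $\int_X\omega^n$, it suffices to test each summand against a \emph{smooth} $\chi$ (smooth functions being dense in $C^0(X)$, and the uniform mass bound giving equicontinuity of the functionals).

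For a fixed summand I would integrate by parts, using the Bedford--Taylor/Stokes formula for wedge products of currents with bounded $\omega$-$m$-sh potentials, to move the $dd^c$ onto $\chi$:
$$
\int_X \chi\, dd^c(\varphi^i_j-\varphi^i)\wedge T_{i,j}=\int_X (\varphi^i_j-\varphi^i)\, dd^c\chi\wedge T_{i,j},
$$
where $T_{i,j}:=\omega_{\varphi^1_j}\wedge\cdots\wedge\omega_{\varphi^{i-1}_j}\wedge\omega_{\varphi^{i+1}}\wedge\cdots\wedge\omega_{\varphi^m}\wedge\omega^{n-m}$ is a closed positive current. As $-C\omega\leq dd^c\chi\leq C\omega$ for some constant $C$, the signed measure $dd^c\chi\wedge T_{i,j}$ is dominated in absolute value by $C\,\omega\wedge T_{i,j}$, a positive measure of total mass $\int_X\omega^n$ independent of $j$. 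Hence the integral is controlled by $\int_X|\varphi^i_j-\varphi^i|\,\omega\wedge T_{i,j}$. Splitting $X$ at a threshold $\delta$, the part where $|\varphi^i_j-\varphi^i|\leq\delta$ contributes at most $\delta\int_X\omega^n$, while the part where $|\varphi^i_j-\varphi^i|>\delta$ is bounded, via the uniform Chern--Levine--Nirenberg estimate dominating mixed Hessian measures of uniformly bounded potentials by a constant times $\Capm$, by $2M\,C'\,\Capm(|\varphi^i_j-\varphi^i|>\delta)$, which tends to $0$ by convergence in capacity; letting $\delta\to0$ gives $\mu_j\rightharpoonup\mu$.

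To reach the weighted statement I would fix $\chi\in C^0(X)$ and decompose
$$
\int_X \chi f_j\, d\mu_j-\int_X \chi f\, d\mu=\int_X \chi (f_j-f)\, d\mu_j+\Big(\int_X \chi f\, d\mu_j-\int_X \chi f\, d\mu\Big).
$$
The first term is treated exactly as above: $\int_X|f_j-f|\,d\mu_j$ is split at a threshold and the high part is estimated by the uniform Chern--Levine--Nirenberg domination together with $f_j\to f$ in $\Capm$. For the second term I would invoke quasi-continuity of $f$: given $\vep>0$, choose an open $U$ with $\Capm(U)<\vep$ on whose complement $f$ is continuous, extend $f|_{X\setminus U}$ by Tietze to a continuous $g$ on $X$ with the same supremum bound, and write $f=g+(f-g)$ with $f-g$ vanishing off $U$. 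Then $\int_X\chi g\,d\mu_j\to\int_X\chi g\,d\mu$ by the unweighted convergence just proved, while the contributions of $f-g$ against both $\mu_j$ and $\mu$ are bounded by $C\,\Capm(U)<C\vep$; letting $\vep\to0$ concludes.

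The algebra of the telescoping sum and the manipulations with convergence in capacity are routine. The genuinely delicate inputs, which I would cite rather than reprove, are the two analytic facts underpinning the argument: the validity of the integration-by-parts formula for wedge products of currents with merely bounded $\omega$-$m$-sh potentials (part of the Bedford--Taylor theory adapted to the $m$-sh setting, cf.\ \cite{LN14}), and, above all, the \emph{uniform} Chern--Levine--Nirenberg inequality $\int_E \omega_{\psi^1}\wedge\cdots\wedge\omega_{\psi^m}\wedge\omega^{n-m}\leq C\,\Capm(E)$ valid for all potentials with a common bound $-M\leq\psi^i\leq0$, the constant $C$ depending only on $M$. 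This uniform domination of mixed measures by capacity is the step on which the whole proof hinges, since it is precisely what converts the smallness of the capacities $\Capm(|\varphi^i_j-\varphi^i|>\delta)$ and $\Capm(U)$ into smallness of the measures $\mu_j$ uniformly in $j$.
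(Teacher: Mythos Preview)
Your proof is correct and is exactly the ``easy adaptation of \cite{Kol05}, Corollary 1.14'' that the paper invokes in lieu of an argument: the telescoping plus integration by parts for the unweighted convergence, followed by the quasi-continuity/Tietze trick for the weights, is the standard Ko{\l}odziej scheme transported to the $m$-sh setting. The two analytic inputs you single out---Stokes for bounded $\omega$-$m$-sh potentials and the uniform domination $\int_E \omega_{\psi^1}\wedge\cdots\wedge\omega_{\psi^m}\wedge\omega^{n-m}\le C(M)\,\Capm(E)$ for $-M\le\psi^i\le 0$ (obtained by averaging and comparing with $H_m$ of a competitor in the capacity)---are precisely what makes the adaptation go through.
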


\section{An inequality for mixed Hessian measures}\label{Ineq}
Our main goal in this section is to prove an inequality for mixed Hessian measures (Theorem \ref{theorem 1}). We first prove some technical results which we need later on. Our approach, which makes use of the "$\beta$-convergence" method of Berman \cite{Berman13}, is global in nature and therefore avoids some difficulties arising from rough boundary data present in the local Dirichlet problem (compare with \cite{Dinew07}). 

Let $\mu$ be a positive Radon measure on $X$. Let $L_{\mu}$ denote the mapping 
$$
\SHXo \ni \f  \mapsto L_{\mu}(\f) := \int_X \f d\mu .
$$
The following result has been proven in \cite{LN14} (it is in fact an an easy adaptation of a theorem from \cite{BBGZ13}):
\begin{lem}
\label{lem: continuity}
Let $\mu$ be a positive Radon measure on $X$ which is dominated by $\Capm$. Then $L_{\mu}$ is continuous on $\Ect$ with respect to the $L^1$ topology.
\end{lem}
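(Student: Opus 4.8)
The plan is to prove continuity along sequences. Since $L_\mu$ is affine and $\mu(X)<\infty$, additive constants are harmless, so I normalize $\sup_X\varphi_j=\sup_X\varphi=0$ (passing to a subsequence if needed) and take $\varphi_j\to\varphi$ in $L^1$ with all terms in $\Ect$. The argument rests on two pillars. The first is a capacity estimate for sublevel sets of finite-energy functions: for $\psi\in\Ect$ with $\sup_X\psi=0$ the quantity $\Capm(\{\psi<-t\})$ decays in $t$ at a rate controlled by the energy of $\psi$, so the decay is uniform over any energy-bounded family. The second is the standing hypothesis that $\mu$ is dominated by $\Capm$, which converts this into a bound on $\mu(\{\psi<-t\})$. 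Feeding both into the layer-cake identity $\int_{\{\psi<-t\}}(-\psi)\,d\mu=t\,\mu(\{\psi<-t\})+\int_t^{\infty}\mu(\{\psi<-s\})\,ds$ yields the key uniform integrability $\sup_j\int_{\{\varphi_j<-t\}}(-\varphi_j)\,d\mu\to0$ as $t\to\infty$; in particular $\Ect\subset L^1(\mu)$, so $L_\mu$ is well defined.

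For the continuity itself I would truncate. With $\varphi_j^t:=\max(\varphi_j,-t)$ and $\varphi^t:=\max(\varphi,-t)$, write
$$\int_X\varphi_j\,d\mu-\int_X\varphi\,d\mu=\int_X(\varphi_j^t-\varphi^t)\,d\mu+\int_X(\varphi_j-\varphi_j^t)\,d\mu-\int_X(\varphi-\varphi^t)\,d\mu.$$
The last two terms are exactly the tails controlled by the uniform integrability above, hence are small uniformly in $j$ once $t$ is large. For the first term the truncations are uniformly bounded; if $\varphi_j^t\to\varphi^t$ in $\Capm$, then writing $g_j:=\varphi_j^t-\varphi^t$ with $|g_j|\le t$ the capacity domination gives $\mu(\{|g_j|>\varepsilon\})\to0$, whence $\int_X|g_j|\,d\mu\le\varepsilon\,\mu(X)+t\,\mu(\{|g_j|>\varepsilon\})$ has $\limsup_j\int_X|g_j|\,d\mu\le\varepsilon\,\mu(X)$, and the bounded part vanishes in the limit. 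Letting $j\to\infty$ and then $t\to\infty$ closes the argument.

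The main obstacle is that $L^1$-convergence by itself controls neither the energies of the $\varphi_j$ (needed for the uniform integrability) nor the convergence in $\Capm$ of the truncations (needed for the bounded part). Both gaps are bridged by passing to the decreasing envelopes $(\sup_{k\ge j}\varphi_k)^*$ and the corresponding increasing regularizations, which sandwich the sequence, converge in $\Capm$ by the proposition on monotone sequences, and have energies controlled in terms of $\varphi$ and the tails of $(\varphi_j)$ --- this is the delicate step inherited from \cite{BBGZ13}, and verifying that the normalization $\sup_X=0$ together with the energy bounds survives these manipulations is where the real work lies. It is also the reason the statement is confined to $\Ect$: on all of $\EcXo$ the integral $\int_X(-\varphi)\,d\mu$ may be infinite and no such uniform control is available.
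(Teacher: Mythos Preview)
The paper provides no proof of this lemma; it simply cites \cite{LN14} and notes that the argument is adapted from \cite{BBGZ13}. Your sketch follows the BBGZ13 strategy faithfully: truncate, control the tails via the capacity estimate for sublevel sets of $\Ect$-functions together with the domination hypothesis, and handle the bounded part through convergence in $\Capm$. That framework is correct and is exactly what the cited references do.

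There is, however, a genuine soft spot in your last paragraph. The decreasing envelopes $\psi_j:=(\sup_{k\ge j}\varphi_k)^*$ give the \emph{upper} bound $\limsup_j L_\mu(\varphi_j)\le L_\mu(\varphi)$ cleanly, since $\varphi_j\le\psi_j\searrow\varphi$ and monotone sequences converge in $\Capm$. But there is no ``corresponding increasing regularization'' sandwiching from below: infima of $\omega$-$m$-sh functions are not $\omega$-$m$-sh, so no analogous lower envelope in $\SHXo$ exists. The lower semicontinuity direction therefore really does rest on the uniform integrability, and for that you need a bound on the energies $E_1(\varphi_j)$ --- the sublevel-capacity estimate $\Capm(\{\varphi_j<-t\})\le C|E_1(\varphi_j)|/t^{m+1}$ is useless if $|E_1(\varphi_j)|\to\infty$. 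Mere $L^1$-convergence does not prevent this (energy is only upper semicontinuous for the weak topology, so $|E_1(\varphi_j)|$ can blow up along an $L^1$-convergent sequence).

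In practice this is harmless: the result in \cite{BBGZ13} is stated as continuity of $L_\mu$ on each sublevel set $\mathcal{E}^1_C=\{E_1\ge -C\}$, and in the paper's only use of the lemma (the proof of Lemma~\ref{lem: convergence}) all functions are uniformly bounded, hence lie in a fixed $\mathcal{E}^1_C$. So your argument, once you drop the nonexistent lower sandwich and instead simply \emph{assume} a uniform energy bound (or uniform boundedness), is complete and matches the intended proof.
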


\begin{lem}
\label{lem: convergence}
Let $(u_j)$ and $(\f_j)$ be  sequences of bounded $\omega$-$m$-sh functions on $X$.
Assume that $u_j$ converges in $\Capm$ to $u\in \SHXo \cap L^{\infty}(X)$ and  $\f_j$ converges in $L^1(X)$ to $\f\in \SHXo \cap L^{\infty}(X)$. If there exists $A>0$ such that 
$$
H_m(\f_j) \leq A\, H_m(u_j) ,\ \forall j ,
$$
then $\f_j \rightarrow \f$  in $\Capm$. In particular, $H_m(\f_j)$ converges weakly to $H_m(\f)$.
\end{lem}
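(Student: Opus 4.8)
The plan is to upgrade the $L^1$-convergence of $\varphi_j$ to convergence in $\Capm$ by controlling separately the two excursion sets $\{\varphi_j>\varphi+\vep\}$ and $\{\varphi_j<\varphi-\vep\}$: the first is soft and follows from a Hartogs-type argument, while the second is exactly where the domination hypothesis must be used. Throughout I may pass to a subsequence, since convergence in $\Capm$ can be checked along subsequences; in particular I assume $\varphi_j\to\varphi$ almost everywhere, and I assume the sequences are uniformly bounded (as they are in our application), say $-M\le u_j,\varphi_j\le 0$. This gives the uniform domination $H_m(u_j)\le C(M)\,\Capm$ for all $j$, obtained by comparing $u_j/M$, which is again $\omega$-$m$-sh, with the competitors in the definition of $\Capm$. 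The final assertion $H_m(\varphi_j)\rightharpoonup H_m(\varphi)$ will be immediate from Lemma \ref{lem: cap convergence} once convergence in $\Capm$ is established, so the whole point is the convergence in capacity.

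For the upper excursion I set $\psi_k:=(\sup_{i\ge k}\varphi_i)^*$. Each $\psi_k$ is $\omega$-$m$-sh, the sequence decreases, and its limit agrees with $\varphi$ almost everywhere, hence equals $\varphi$ because two $\omega$-$m$-sh functions coinciding a.e. are equal. Thus $\psi_k\downarrow\varphi$, and by the convergence in capacity of monotone decreasing sequences recalled above, $\psi_k\to\varphi$ in $\Capm$. Since $\varphi_j\le\psi_j$ pointwise, one has $\{\varphi_j>\varphi+\vep\}\subset\{\psi_j-\varphi>\vep\}$, whose $\Capm$-measure tends to $0$.

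For the lower excursion I invoke the standard capacity--comparison inequality for bounded $\omega$-$m$-sh functions: for $s\in(0,1)$ one has $\Capm(\{\varphi_j<\varphi-2s\})\le C s^{-m}\int_{\{\varphi_j<\varphi-s\}}H_m(\varphi_j)$. The domination hypothesis turns the right-hand side into $\le CA s^{-m}\int_{\{\varphi_j<\varphi-s\}}H_m(u_j)\le CA s^{-m-1}\int_X(\varphi-\varphi_j)_+\,dH_m(u_j)$. Hence it suffices to prove that $\int_X(\varphi-\varphi_j)_+\,dH_m(u_j)\to0$. Writing $(\varphi-\varphi_j)_+=(\varphi-\varphi_j)+(\varphi_j-\varphi)_+$, the term $\int_X(\varphi_j-\varphi)_+\,dH_m(u_j)\le\int_X(\psi_j-\varphi)\,dH_m(u_j)$ tends to $0$, since $\psi_j-\varphi\to0$ in $\Capm$ and $u_j\to u$ in $\Capm$, so Lemma \ref{lem: cap convergence} gives $(\psi_j-\varphi)H_m(u_j)\rightharpoonup 0$ (test against the constant $1$). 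Similarly $\int_X\varphi\,dH_m(u_j)\to\int_X\varphi\,dH_m(u)$ by Lemma \ref{lem: cap convergence} applied with the fixed bounded quasi-continuous weight $\varphi$.

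This isolates the only delicate point, the lower-semicontinuity estimate $\liminf_j\int_X\varphi_j\,dH_m(u_j)\ge\int_X\varphi\,dH_m(u)$. Indeed, writing $A_j:=\int_X(\varphi-\varphi_j)\,dH_m(u_j)=\int_X\varphi\,dH_m(u_j)-\int_X\varphi_j\,dH_m(u_j)$, this estimate yields $\limsup_j A_j\le0$; together with $\int_X(\varphi-\varphi_j)_+\,dH_m(u_j)=A_j+\int_X(\varphi_j-\varphi)_+\,dH_m(u_j)\ge0$ and the fact that the last integral tends to $0$, it forces $\int_X(\varphi-\varphi_j)_+\,dH_m(u_j)\to0$, as needed. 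I expect this estimate to be the main obstacle: it cannot follow from the $L^1$-convergence of $\varphi_j$ alone, since a priori $\varphi_j$ could dip far below $\varphi$ on a set of small Lebesgue measure that nonetheless carries substantial $H_m(u_j)$-mass. It is precisely here that the hypothesis $H_m(\varphi_j)\le A\,H_m(u_j)$ is indispensable: a deep trough of $\varphi_j$ would force a concentration of $H_m(\varphi_j)$, hence of $H_m(u_j)$, which is excluded because $H_m(u_j)\rightharpoonup H_m(u)$ with uniform domination by $\Capm$. I would prove the estimate through the global comparison and $\beta$-convergence circle of ideas used for the main inequality, exploiting the domination to compare $\int_X\varphi_j\,dH_m(u_j)$ with the energy of $\varphi_j$ and with the corresponding limiting quantity for $\varphi$.
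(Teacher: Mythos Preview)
Your reduction is sound up to the point where you isolate the lower-semicontinuity estimate $\liminf_{j}\int_X \varphi_j\,dH_m(u_j)\ge\int_X \varphi\,dH_m(u)$, but there the argument stops: the closing paragraph only says you ``would prove the estimate through the global comparison and $\beta$-convergence circle of ideas,'' with no concrete mechanism. This is a genuine gap, and there is a real danger of circularity, since the cleanest route to such a bound is precisely to know that $\varphi_j\to\varphi$ in capacity---which is what you are trying to establish. The domination $H_m(\varphi_j)\le A\,H_m(u_j)$ controls the \emph{Hessian measures} of $\varphi_j$, and it is not evident how to convert this directly into a lower bound for $\int\varphi_j\,dH_m(u_j)$ without first controlling the sublevel sets of $\varphi_j$. (Your Hartogs-envelope treatment of the upper excursion is fine, incidentally, and is a legitimate alternative to the paper's use of Lemma~\ref{lem: continuity} at that step.)

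The paper closes the argument by decomposing differently at exactly this spot: instead of splitting the integrand $(\varphi-\varphi_j)_+$, it splits the measure, writing
\[
\int_X(\varphi-\varphi_j)_+\,dH_m(u_j)=\int_X(\varphi-\varphi_j)_+\,dH_m(u)+\int_X(\varphi-\varphi_j)_+\bigl[dH_m(u_j)-dH_m(u)\bigr].
\]
The first integral is against the \emph{fixed} measure $H_m(u)$, dominated by $\Capm$; since $(\varphi-\varphi_j)_+=\max(\varphi,\varphi_j)-\varphi_j$ is a difference of $\omega$-$m$-sh functions each converging to $\varphi$ in $L^1$, Lemma~\ref{lem: continuity} kills it. For the second, one telescopes $H_m(u_j)-H_m(u)=\sum_k dd^c(u_j-u)\wedge\omega_{u_j}^{m-1-k}\wedge\omega_u^k\wedge\omega^{n-m}$ and integrates by parts, moving the $dd^c$ onto $(\varphi-\varphi_j)_+$; since $dd^c(\varphi-\varphi_j)_+=\omega_{\max(\varphi,\varphi_j)}-\omega_{\varphi_j}$ is a difference of positive currents with uniformly bounded potentials, the whole thing is controlled by $C\int_X|u_j-u|\,H_m(\psi_j)$ for uniformly bounded $\psi_j$, which tends to zero because $u_j\to u$ in $\Capm$. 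The integration by parts is the missing idea: it transfers the roughness from the $L^1$-converging factor $\varphi_j$ to the capacity-converging factor $u_j$, and that is what makes the loop close.
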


\begin{proof}
Without loss of generality we can assume that all the functions are negative. Fix $\vep>0$ and set $E_j:=\{\f<\f_j-3\vep\}$. We are going to prove that $\Capm(E_j)\to 0$ as $j\to+\infty$. Let $C>1$ be a  constant such that 
$$
\sup_X (|u_j|+|\f_j|+|u|+|\f|) \leq C\ , \ \forall j .
$$
Fix $1>\delta>0$ such that $\delta C<\vep$ and let $v\in \SHXo$ such that $-1\leq v\leq 0$. The following inclusions are obvious:
$$
\{\f<\f_j-3\vep\} \subset  \{\f<(1-\delta)\f_j +\delta v-2\vep\}\subset \{\f<\f_j -\vep\}.
$$
By the comparison principle we thus get 
$$
\delta^m \int_{\{\f<\f_j-3\vep\}} H_m(v) \leq \int_{\{\f<\f_j-\vep\}} H_m(\f)\leq \vep^{-1}\int_X [\max(\f,\f_j)-\f ] H_m(\f).
$$
Taking the supremum over all $v$ and using Lemma \ref{lem: continuity} we obtain 
$$
\lim_{j\to+\infty} \Capm(E_j) = 0 .
$$
Now, set $F_{\vep}:=\{\f_j<\f-3\vep\}$. It remains to prove that $\Capm(F_j)$ also converges to $0$ as $j\to+\infty$. Arguing as in the first step we get
\begin{eqnarray*}
\delta^m \Capm(F_j) &\leq & \vep^{-1}\int_X \left[\max(\f_j,\f)-\f_j\right]H_m(\f_j)\\
&\leq & A\vep^{-1}\int_X \left[\max(\f_j,\f)-\f_j\right]H_m(u_j).
\end{eqnarray*}
On the other hand, since 
$$
\int_X \left[\max(\f_j,\f)-\f_j\right]H_m(u) \longrightarrow 0
$$
as follows from the first step, it suffices to prove that 
$$
\int_X \left[\max(\f_j,\f)-\f_j\right] \left[ H_m(u_j)-H_m(u)\right] \longrightarrow 0 .
$$
By an integration by parts the above term is dominated by
$$
C_1 \int_X |u_j-u| H_m(\p_j),
$$
where $\p_j$ is a sequence of uniformly bounded $\omega$-$m$-sh functions and $C_1$ depends only on $C$. Now, fix a small $t>0$. Since 
$u_j$ converges to $u$ in $\Capm$ we get for some $C_2$ also dependent only on $C$ that
\begin{eqnarray*}
\limsup_{j\to+\infty}\int_X |u_j-u| H_m(\p_j) &\leq & t \int_X \omega^n +  \limsup_{j\to+\infty} C_2 \Capm(\{|u_j-u|>t\} \\
&=&t\int_X \omega^n ,
\end{eqnarray*}
from which the result follows.
\end{proof}

The uniqueness result for bounded $\omega$-$m$-sh functions can be proven by repeating the arguments in \cite{Bl03}: 
\begin{lem}
\label{lem: uniqueness bounded}
Let $u,v$ be bounded $\omega$-$m$-sh functions. If $H_m(u)=H_m(v)$ then $u-v$ is constant.
\end{lem}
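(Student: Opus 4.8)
The plan is to reduce the statement to the vanishing of a single Dirichlet-type energy. Write $w:=u-v$ and abbreviate $\omega_u:=\omega+dd^c u$, $\omega_v:=\omega+dd^c v$; these are positive currents since $u,v$ are bounded $\omega$-$m$-sh functions. Assuming $X$ connected, it suffices to prove
\[
\int_X dw\wedge d^c w\wedge\omega^{n-1}=0,
\]
because $\omega$ is K\"ahler, so the integrand is a nonnegative measure whose total mass equals, up to a positive constant, $\int_X|\partial w|_\omega^2\,\omega^n$; its vanishing forces $dw=0$ almost everywhere and hence $w$ constant.

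First I would extract vanishing mixed energies from the hypothesis. Using $\omega_u-\omega_v=dd^c w$ and the telescoping identity, the signed measure $H_m(u)-H_m(v)$ equals
\[
dd^c w\wedge\Big(\textstyle\sum_{j=0}^{m-1}\omega_u^{\,j}\wedge\omega_v^{\,m-1-j}\Big)\wedge\omega^{n-m}.
\]
Since $H_m(u)=H_m(v)$, pairing with $w$ gives $0=\int_X w\,(H_m(u)-H_m(v))$. Integrating by parts (legitimate for bounded $\omega$-$m$-sh functions via the smooth decreasing approximations of \cite{LN14} and the Bedford--Taylor formalism), and using that each current $\omega_u^{\,j}\wedge\omega_v^{\,m-1-j}\wedge\omega^{n-m}$ is closed and positive, I obtain
\[
0=-\sum_{j=0}^{m-1}\int_X dw\wedge d^c w\wedge\omega_u^{\,j}\wedge\omega_v^{\,m-1-j}\wedge\omega^{n-m}.
\]
Each summand is the integral of a nonnegative measure (the factor $dw\wedge d^c w\ge0$ wedged with a positive current), so every summand vanishes; in particular the measures $dw\wedge d^c w\wedge\omega_u^{\,j}\wedge\omega_v^{\,m-1-j}\wedge\omega^{n-m}$ are identically zero for each $j$.

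The remaining, and main, step is to upgrade the vanishing of these degenerate energies to the vanishing of $\int_X dw\wedge d^c w\wedge\omega^{n-1}$. Here I would follow \cite{Bl03}: replace the factors $\omega_u$ and $\omega_v$ by $\omega$ one at a time, writing $\omega=\omega_u-dd^c u$, disposing of the $dd^c u$-contributions by a further integration by parts, and controlling the resulting genuinely mixed terms by a Cauchy--Schwarz-type inequality for such gradient integrals. I expect this descent to be the crux of the proof: the forms $\omega_u$ and $\omega_v$ may drop rank precisely on the set $\{dw\neq0\}$, so one cannot simply dominate $\omega^{n-1}$ by the combination $\sum_j\omega_u^{\,j}\wedge\omega_v^{\,m-1-j}\wedge\omega^{n-m}$; taming this degeneracy is exactly the content of B\l ocki's argument, which transfers to the $m$-Hessian setting once the approximation theory of \cite{LN14} is in place.
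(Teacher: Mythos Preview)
Your proposal is correct and follows precisely the route the paper itself indicates: the paper gives no proof of this lemma, stating only that ``the uniqueness result for bounded $\omega$-$m$-sh functions can be proven by repeating the arguments in \cite{Bl03}.'' You have in fact supplied more detail than the paper does, correctly setting up the telescoping identity and the initial vanishing of the mixed gradient energies, and correctly identifying the B{\l}ocki descent via integration by parts and Cauchy--Schwarz as the remaining step.

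One small imprecision worth flagging: when $m<n$ the forms $\omega_u,\omega_v$ are \emph{not} positive $(1,1)$-currents, only $m$-positive. What you actually need, and what does hold (via G{\aa}rding's theory, since any positive $(1,1)$-form is a fortiori $m$-positive), is that each $(n-1,n-1)$-current $\omega_u^{\,j}\wedge\omega_v^{\,m-1-j}\wedge\omega^{n-m}$ is positive; this is enough both for the nonnegativity of your summands and for the Cauchy--Schwarz inequality used in the descent.
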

We shall also need the following domination principle:
\begin{lem}
\label{lem: domination principle}
Let $u,v$ be bounded $\omega$-$m$-sh functions. Assume that $H_m(u)$ vanishes on the set $\{u<v\}$. 
Then $u\geq v$ on $X$.
\end{lem}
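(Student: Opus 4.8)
The plan is to derive the domination principle from the uniqueness result for bounded potentials (Lemma \ref{lem: uniqueness bounded}), rather than to argue pointwise by hand: the comparison principle alone only shows that various Hessian masses vanish on $\{u<v\}$, and by itself it cannot upgrade such a statement to the genuinely \emph{pointwise} inequality $u\ge v$. The auxiliary object I would work with is $w:=\max(u,v)$, which is again a bounded $\omega$-$m$-sh function and satisfies $w\ge u$.

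The first step is to invoke the maximum-principle inequality for the Hessian operator, $H_m(w)\ge \ind_{\{u\ge v\}}H_m(u)+\ind_{\{u<v\}}H_m(v)$, which comes from locality in the plurifine topology on the sets $\{u>v\}$ and $\{u<v\}$ together with the contribution on the contact set $\{u=v\}$. Next I would integrate this over $X$. Since $u,v,w$ are bounded, $\int_X H_m(u)=\int_X H_m(w)=\int_X\omega^n$, and the hypothesis that $H_m(u)$ vanishes on $\{u<v\}$ gives $\int_{\{u\ge v\}}H_m(u)=\int_X\omega^n$. Comparing total masses then forces simultaneously $\int_{\{u<v\}}H_m(v)=0$ and equality in the inequality above; combined once more with the vanishing hypothesis, this produces the identity of measures $H_m(w)=\ind_{\{u\ge v\}}H_m(u)=H_m(u)$.

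With $H_m(w)=H_m(u)$ in hand, Lemma \ref{lem: uniqueness bounded} yields $w-u\equiv c$ for some constant $c$, with $c\ge 0$ because $w\ge u$. The last step is to exclude $c>0$: if $c>0$ then $\max(u,v)=u+c>u$ everywhere, which forces $v=u+c$ on all of $X$, so that $\{u<v\}=X$ and hence $\int_X H_m(u)=\int_{\{u<v\}}H_m(u)=0$, contradicting $\int_X H_m(u)=\int_X\omega^n>0$. Therefore $c=0$, i.e. $\max(u,v)=u$, which is precisely $u\ge v$ on $X$.

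I expect the main obstacle to be the careful justification of the maximum-principle inequality on the contact set $\{u=v\}$, together with the attendant mass bookkeeping: one must ensure that the plurifine locality argument accounts correctly for $\{u=v\}$ and that the total Hessian mass of a bounded $\omega$-$m$-sh function is indeed $\int_X\omega^n$. Once this inequality and the uniqueness lemma are granted, the remaining manipulations are soft and measure-theoretic.
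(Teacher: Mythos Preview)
Your argument is correct, but it takes a genuinely different route from the paper's own proof. The paper argues directly with the comparison principle and capacity: for small $s>0$ and any test function $\varphi\in\SHXo$ with $-1\le\varphi\le 0$, it applies the comparison principle to $u$ and $(1-s)v+s\varphi$ to get
\[
s^m\int_{\{u<v-2\vep\}}H_m(\varphi)\ \le\ \int_{\{u<v\}}H_m(u)=0,
\]
so $\Capm(\{u<v-2\vep\})=0$ for every $\vep>0$, which yields $u\ge v$. In particular, your opening remark that the comparison principle ``cannot upgrade'' mass vanishing to a pointwise inequality is not quite right: the paper does exactly this via the capacity estimate.

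Your approach instead uses $w=\max(u,v)$, the Demailly-type inequality $H_m(w)\ge \ind_{\{u\ge v\}}H_m(u)+\ind_{\{u<v\}}H_m(v)$, total-mass bookkeeping, and then the uniqueness lemma (Lemma~\ref{lem: uniqueness bounded}) to force $w=u+c$ and finally $c=0$. This is a valid alternative and the steps are fine; in the paper's logical order Lemma~\ref{lem: uniqueness bounded} is already available, so there is no circularity. The trade-off is that the paper's proof is entirely self-contained (comparison principle only), whereas yours imports two nontrivial inputs: the maximum-principle inequality for $H_m(\max(u,v))$---whose justification on the contact set you rightly flag as the delicate point---and the B{\l}ocki uniqueness theorem. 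Conversely, your route is more conceptual: it explains the domination principle as a corollary of uniqueness for the Hessian equation, which is a nice structural observation.
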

\begin{proof}
We can assume that $v\leq 0$. Fix $\vep>0$ and $1>s>0$ such that $s\max(1,\sup_X |v|)<\vep$. Let
$\f$ be a $\omega$-$m$-sh function on $X$ such that $-1\leq \f\leq 0$.
Applying the comparison principle we obtain

\begin{eqnarray*}
s^m \int_{\{u<v-2\vep\}} H_m(\f) &\leq &\int_{\{u<v-2\vep\}} H_m((1-s) v+s\f)\\
&\leq & \int_{\{u<(1-s)v+s\f-\vep\}} H_m(s\f +(1-s) v)\\
&\leq & \int_{\{u<(1-s)v+s\f-\vep\}} H_m(u)\\
&\leq & \int_{\{u<v\}} H_m(u)=0.\\
\end{eqnarray*}
We then get $\Capm(\{u<v-2\vep\})=0$ which implies the result.
\end{proof}

From the domination principle we immediately get the following:
\begin{lem}
\label{lem: comparison}
Let $u, v$ be bounded $\omega$-$m$-sh functions. Assume that $\mu$ is a positive Radon measure such that 
$$
H_m(u) \leq e^u \mu \ {\rm and}\ H_m(v) \geq e^v \mu .
$$
Then $v\leq u$. 
\end{lem}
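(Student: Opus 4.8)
The plan is to reduce the statement to the domination principle (Lemma \ref{lem: domination principle}): it suffices to show that the measure $H_m(u)$ puts no mass on the set $\{u<v\}$, for then Lemma \ref{lem: domination principle} immediately gives $u\geq v$ on $X$, which is exactly the claim $v\leq u$.

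First I would invoke the comparison principle for bounded $\omega$-$m$-sh functions, applied to $u$ and $v$ on the Borel set $\{u<v\}$, to obtain
$$
\int_{\{u<v\}} H_m(v) \leq \int_{\{u<v\}} H_m(u).
$$
Combining this with the two standing hypotheses $H_m(v)\geq e^v\mu$ and $H_m(u)\leq e^u\mu$ produces the chain
$$
\int_{\{u<v\}} e^v\, d\mu \;\leq\; \int_{\{u<v\}} H_m(v) \;\leq\; \int_{\{u<v\}} H_m(u) \;\leq\; \int_{\{u<v\}} e^u\, d\mu,
$$
and hence $\int_{\{u<v\}} (e^v-e^u)\, d\mu \leq 0$.

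On the set $\{u<v\}$ the integrand $e^v-e^u$ is strictly positive, so this last inequality forces $\mu(\{u<v\})=0$. Since $H_m(u)\leq e^u\mu$, the measure $H_m(u)$ therefore vanishes on $\{u<v\}$, and an application of the domination principle completes the argument.

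I expect the only delicate point to be justifying the comparison-principle inequality on the non-open level set $\{u<v\}$: one approximates $\{u<v\}$ from inside by the open sets $\{u<v-\vep\}$ and passes to the limit using the quasi-continuity of $\omega$-$m$-sh functions. This is the Bedford--Taylor comparison principle transposed to the $m$-subharmonic setting, and it is by now standard in this theory (indeed it underlies the proof of Lemma \ref{lem: domination principle} itself), so the overall deduction is short — which is consistent with the assertion that the lemma follows \emph{immediately} from the domination principle.
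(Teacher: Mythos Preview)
Your argument is correct and is essentially identical to the paper's own proof: both combine the comparison principle $\int_{\{u<v\}} H_m(v)\leq \int_{\{u<v\}} H_m(u)$ with the hypotheses to obtain $\int_{\{u<v\}}(e^v-e^u)\,d\mu\leq 0$, conclude $\mu(\{u<v\})=0$ and hence $H_m(u)(\{u<v\})=0$, and then finish with the domination principle. The only cosmetic difference is that the paper arranges the same four inequalities into a single closed chain starting and ending with $\int_{\{u<v\}} H_m(u)$, forcing equality throughout.
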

\begin{proof}
By the comparison principle we have
$$
\int_{\{u<v\}} H_m(u) \leq \int_{\{u<v\}} e^u d\mu \leq  \int_{\{u<v\}} e^v d\mu \leq \int_{\{u<v\}} H_m(v)     \leq \int_{\{u<v\}} H_m(u)  
$$
Thus all inequalities above become equalities and we infer that 
$H_m(u)(u<v)=\mu(u<v)=0$. Now, it suffices to apply the domination principle. 
\end{proof}

\begin{lem}
\label{lem: mixed equality}
Let $u,v,\f,\p_1,\p_2$ be bounded $\omega$-$m$-sh functions such that
$$
H_m(u) = f H_m(\f) + h_1 H_m(\p_1) \ , \ H_m(v) = g H_m(\f) + h_2 H_m(\p_2) ,
$$
where $f,g,h_1,h_2$ are non-negative bounded functions. Then for any $1\leq k\leq m-1$,
$$
\omega_u^{k} \wedge \omega_v^{m-k}\wedge \omega^{n-m} \geq f^{k/m}g^{(m-k)/m} H_m(\f). 
$$
\end{lem}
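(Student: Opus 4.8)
The plan is to reduce to the smooth case, where the inequality is pointwise, and then to pass to the general case by approximation, the subtle point being that the right-hand side depends nonlinearly on the Hessian densities. Suppose first that $u,v,\f,\p_1,\p_2$ and $f,g,h_1,h_2$ are all smooth. Dropping the non-negative terms $h_1H_m(\p_1)$ and $h_2H_m(\p_2)$ gives the pointwise density bounds $H_m(u)/\omega^n\geq f\,H_m(\f)/\omega^n$ and $H_m(v)/\omega^n\geq g\,H_m(\f)/\omega^n$. Working in a local chart, write $\omega_u=dd^c(\rho+u)$ and $\omega_v=dd^c(\rho+v)$ with $\rho$ a local potential of $\omega$, so that $\rho+u,\rho+v$ are $m$-sh; applying Lemma \ref{lem: Garding} to $k$ copies of $\rho+u$ together with $m-k$ copies of $\rho+v$ yields
\[
\omega_u^{k}\wedge\omega_v^{m-k}\wedge\omega^{n-m}\geq\left(\frac{H_m(u)}{\omega^n}\right)^{\!k/m}\left(\frac{H_m(v)}{\omega^n}\right)^{\!(m-k)/m}\!\omega^n\geq f^{k/m}g^{(m-k)/m}H_m(\f),
\]
which settles the smooth case.

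To treat the general case I would approximate $u,v,\f$ by smooth $\omega$-$m$-sh functions $u_j,v_j,\f_j$, apply the smooth inequality, and let $j\to\infty$. Provided $u_j\to u$ and $v_j\to v$ in $\Capm$, Lemma \ref{lem: cap convergence} guarantees that the left-hand side converges weakly to $\omega_u^{k}\wedge\omega_v^{m-k}\wedge\omega^{n-m}$. The real difficulty is the right-hand side: since $(H_m(u_j)/\omega^n)^{k/m}(H_m(v_j)/\omega^n)^{(m-k)/m}$ is a nonlinear function of the densities, mere weak convergence of $H_m(u_j)$ and $H_m(v_j)$ is insufficient, and one must arrange that the relevant densities converge in capacity. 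This is the reason the approximants cannot simply be decreasing regularizations of $u,v,\f$, and it is what forces the use of the preparatory lemmas.

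Concretely I would build the approximants by solving Hessian equations with smoothed right-hand sides that respect the two decompositions: choosing smooth measures converging to $H_m(\f),H_m(\p_1),H_m(\p_2)$ and smooth $f_j,g_j,h_{1,j},h_{2,j}\to f,g,h_1,h_2$, one solves (after normalizing the total masses, where the exponential regularization of Theorem \ref{thm: exponential} supplies uniqueness and the comparison of Lemma \ref{lem: comparison}) for smooth $u_j,v_j,\f_j$; smoothness of the solutions follows from \cite{DK12}. The main obstacle is to show these solutions converge to $u,v,\f$ in $\Capm$. I expect to obtain $L^1$-convergence from the continuity estimate of Lemma \ref{lem: continuity} together with the domination principle (Lemma \ref{lem: domination principle}), and then to upgrade it to capacity convergence via Lemma \ref{lem: convergence}, dominating the approximate Hessian measures by that of a single auxiliary solution whose capacity convergence is already known. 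With capacity convergence of both the potentials and the densities in hand, Lemma \ref{lem: cap convergence} lets me pass to the limit on both sides simultaneously, and letting the smoothing tend to the original data yields the claimed inequality; the delicate point throughout is precisely this upgrade from weak to capacity convergence, forced by the nonlinearity of the exponents $k/m$ and $(m-k)/m$.
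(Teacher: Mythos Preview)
Your treatment of the smooth case and your overall plan (solve auxiliary equations with smoothed data, then upgrade $L^1$ to capacity convergence via Lemma~\ref{lem: convergence}) match the paper's strategy. The gap is that your single-shot approximation---varying $\f,\p_1,\p_2,f,g,h_i$ simultaneously and solving for smooth $u_j,v_j,\f_j$---hides a circularity. To invoke Lemma~\ref{lem: convergence} for $u_j$ you need a comparison sequence that \emph{already} converges in capacity and whose Hessian dominates $H_m(u_j)$; if everything moves at once there is no natural candidate, and your ``single auxiliary solution whose capacity convergence is already known'' is never produced. The paper resolves this by a layered four-step approximation: (1) keep $\f,\p_1,\p_2$ smooth and vary only the densities, so $H_m(u_j)\leq C\bigl(H_m(\f)+H_m(\p_1)\bigr)$ with a \emph{fixed} right side, and Lemma~\ref{lem: convergence} applies with a constant comparison sequence; (2) replace $\f,\p_i$ by \emph{decreasing} smooth approximants from \cite{LN14} (these converge in capacity automatically, so one does not solve for $\f_j$) and solve the exponentially twisted equation $H_m(u_j)=e^{u_j-u}\bigl[fH_m(\f^j)+h_1H_m(\p_1^j)\bigr]$, where Lemma~\ref{lem: comparison} forces the limit to be $u$; (3)--(4) relax the quasi-continuity and strict-positivity hypotheses on $f,g$ by further twisted equations.

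Two further imprecisions: $L^1$-convergence of $u_j$ to $u$ does not come from Lemma~\ref{lem: continuity} or the domination principle as you suggest. In Step~1 the paper obtains it from compactness plus the bounded uniqueness Lemma~\ref{lem: uniqueness bounded} (any subsequential limit solves the same equation as $u$), and in Steps~2--4 the exponential twist together with Lemma~\ref{lem: comparison} pins the limit down directly. Also, solving an equation for $\f_j$ is unnecessary and counterproductive: taking $\f^j\searrow\f$ from \cite{LN14} is exactly what makes $H_m(\f^j)\to H_m(\f)$ and keeps the right-hand side under control when passing to the limit.
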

\begin{proof}

{\bf Step 1}: Assume that  $\f,\p_1,\p_2$ are smooth on $X$. 

\medskip

Let 
$f^j,g^j,h_1^j,h_2^j$ be uniformly bounded sequences of smooth non-negative functions which converge  in $L^1(X)$ to $f,g,h_1,h_2$ respectively. We can also assume that these sequences are normalized so that the corresponding measures have the same mass as $\int_X \omega^n$. We now use the main result of \cite{DK12} to solve the following equations
$$
H_m(u_j) = f_jH_m(\f) + h_1^j H_m(\p_1), \ H_m(v_j) =g_j H_m(\f) + h_2^j H_m(\p_2),
$$
where $u_j, v_j$ are smooth $\omega$-$m$-sh functions. 
We also normalize $u_j,v_j$ so that $\sup_X u_j=\sup_X u$ and 
$\sup_X v_j=\sup_X v$. It follows from Lemma \ref{lem: convergence} and Lemma \ref{lem: uniqueness bounded} that $u_j$ and $v_j$ converge in $\Capm$ to $u$ and $v$ respectively. Since $u_j$ and $v_j$ are smooth by G{\aa}rding's inequality (\cite{Ga59}) we get 
$$
\omega_{u_j}^{k} \wedge \omega_{v_j}^{m-k}\wedge \omega^{n-m} \geq (f_j)^{k/m}(g_j)^{(m-k)/m} H_m(\f).
$$
Now, the result follows by letting $j\to+\infty$. 

\medskip

\noindent{\bf Step 2}: Assume that  $f,g$ are quasi-continuous on $X$ and $\min(f,g)\geq \delta>0$ for some positive constant $\delta$. 
\medskip

It follows from \cite{LN14} that we can approximate $\f, \p_1,\p_2$ from above by decreasing sequences of smooth $\omega$-$m$-sh functions, say $(\f^j), (\p_1^j), (\p_2^j)$. 
Let $u_j\in \SHXo$ solve the equation
$$
H_m(u_j) = e^{u_j-u} \left[fH_m(\f^j) + h_1 H_m(\p_1^j)\right].
$$
Since $\f_j, \p_1^j$ are uniformly bounded and $u, f, h_1$ are bounded
(in fact $\delta\leq f\leq C$) we deduce from the comparison principle that $u_j$ is also uniformly bounded. Assume that $u_j$ converges in $L^1(X)$ to some bounded $\omega$-$m$-sh function $u_{\infty}$. It follows from Lemma \ref{lem: convergence} that 
$u_j$ converges in $\Capm$ to $u_{\infty}$. By letting $j\to +\infty$ we get
$$
H_m(u_{\infty}) =e^{u_{\infty}-u}H_m(u).
$$ 
Applying Lemma \ref{lem: comparison} we get $u=u_{\infty}$. Now, we do the same thing for $v$ to get a sequence $v_j$ converging in $\Capm$ to $v$. Applying Step 1 for  $u_j$ and $v_j$ we get
$$
\omega_{u_j}^{k} \wedge \omega_{v_j}^{m-k}\wedge \omega^{n-m} \geq 
e^{k(u_j-u)/m + (m-k)(v_j-v)/m}f^{k/m}g^{(m-k)/m} H_m(\f_j) .
$$
Since $u_j,v_j$ converge in $\Capm$ to $u$ and $v$ respectively and $f,g$ are quasi-continuous we can argue as in \cite{Kol05} (see Lemma \ref{lem: cap convergence}) to see that the right-hand side of the above inequality converges to 
$$
f^{k/m}g^{(m-k)/m} H_m(\f) ,
$$
while the left-hand side converges to $\omega_{u}^{k} \wedge \omega_{v}^{m-k}\wedge \omega^{n-m}$. 
\medskip

\noindent
{\bf Step 3}: Assume that $\min(f,g)\geq \delta>0$ for some positive constant $\delta$.

Let $f_j,g_j$ be uniformly bounded sequences of continuous non-negative functions which converge  in $L^1(X,H_m(\f))$ to $f,g$ respectively. 
We can assume also that $\min(f_j,g_j) \geq \delta$ for all $j$. Let $u_j$ solve the equation
$$
H_m(u_j) = e^{u_j-u} \left[f_jH_m(\f) + h_1 H_m(\p_1)\right].
$$
It follows from Lemma \ref{lem: comparison} that $u_j$ is uniformly bounded. We can argue as in Step 2 to see that $u_j$ converges in $\Capm$ to $u$. Now, do the same thing for $v$ and use Step 2 to get
$$
\omega_{u_j}^{k} \wedge \omega_{v_j}^{m-k}\wedge \omega^{n-m} \geq 
e^{k(u_j-u)/m + (m-k)(v_j-v)/m}f_j^{k/m}g_j^{(m-k)/m} H_m(\f) .
$$
The result follows by letting $j\to+\infty$.

\medskip

\noindent{\bf Step 4}: We now prove the general statement. 
Consider $f_{\vep}:=\max(f,\vep)$ and solve 
$$
H_m(u_{\vep}) = e^{u_{\vep}-u} \left[f_{\vep} H_m(\f) + h_1 H_m(\p_1)\right].
$$
Then $u_{\vep}\leq u$ and $u_{\vep}\geq \f/2 + \p_1/2 -C$ as follows from the comparison principle (see Lemma \ref{lem: comparison}). Do the same thing for $v$ and apply Step 3 to conclude.
\end{proof}

Now, we are ready to prove Theorem \ref{theorem 1}. For the sake of simplicity we only treat the case when there are only two functions instead of a collection of $m$ functions. The general case follows in an obvious way by changing the notation. 
\begin{thm}
\label{thm: mixed hessian inequality}
Let $u,v$ be functions in $\EcXo$ and $\mu$ be a positive Radon measure vanishing on $m$-polar sets such that 
$$
H_m(u) \geq f \mu \ , \ H_m(v) \geq g \mu ,
$$
where $f,g$ are non-negative integrable (with respect to $\mu$) functions. Then for any $1\leq k\leq m-1$, one has
$$
\omega_u^k \wedge \omega_v^{m-k}\wedge \omega^{n-m} \geq f^{k/m}g^{(m-k)/m} \mu .
$$
\end{thm}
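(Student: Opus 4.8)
The plan is to deduce the statement from the bounded, equality version already established in Lemma \ref{lem: mixed equality}, by removing in turn the three ways in which the present hypotheses are weaker: the potentials $u,v$ merely lie in $\EcXo$ and may be unbounded, the coefficients $f,g$ are only $\mu$-integrable, and the relation is an inequality against an arbitrary measure $\mu$ vanishing on $m$-polar sets rather than an equality with a prescribed Hessian decomposition. First I would reduce to bounded data. Setting $u_j=\max(u,-j)$, $v_j=\max(v,-j)$ and using that $H_m$ is local in the plurifine topology, one has $H_m(u_j)\geq f_j\mu$ and $H_m(v_j)\geq g_j\mu$ with $f_j=f\ind_{\{u>-j\}}$, $g_j=g\ind_{\{v>-j\}}$; replacing $f,g$ by $\min(f,N),\min(g,N)$ only weakens the hypotheses, so we may also assume $f,g$ bounded. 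Granting the bounded case, the conclusion for $(u_j,v_j)$ passes to $(u,v)$ because the generalized mixed product of the class $\EcXo$ satisfies $\ind_{\{u>-j,\,v>-j\}}\,\omega_{u_j}^k\wedge\omega_{v_j}^{m-k}\wedge\omega^{n-m}\nearrow \omega_u^k\wedge\omega_v^{m-k}\wedge\omega^{n-m}$, and $\mu$ charges no $m$-polar set, so testing against a nonnegative continuous function the right-hand sides increase by monotone convergence to $f^{k/m}g^{(m-k)/m}\mu$.

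It then remains to treat bounded $u,v,f,g$, where I would manufacture the Hessian decomposition required by Lemma \ref{lem: mixed equality}. Write $H_m(u)=f\mu+\nu_1$ and $H_m(v)=g\mu+\nu_2$ with $\nu_1,\nu_2\geq0$; all of $\mu,\nu_1,\nu_2$ vanish on $m$-polar sets, so after normalizing their masses to $\int_X\omega^n$ the existence theorem of \cite{LN14} produces $\f,\p_1,\p_2\in\EcXo$ with $H_m(\f)=c\mu$, $H_m(\p_1)=\nu_1/h_1$, $H_m(\p_2)=\nu_2/h_2$ for suitable constants $c,h_1,h_2>0$. This yields $H_m(u)=\tilde f\, H_m(\f)+h_1 H_m(\p_1)$ and $H_m(v)=\tilde g\, H_m(\f)+h_2 H_m(\p_2)$ with $\tilde f=f/c$, $\tilde g=g/c$, and a short computation shows that the normalizing constant cancels in the output coefficient, since $\tilde f^{k/m}\tilde g^{(m-k)/m}\,c=f^{k/m}g^{(m-k)/m}$. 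The only defect is that $\f,\p_1,\p_2$ need not be bounded, so Lemma \ref{lem: mixed equality} does not yet apply.

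To repair this I would run, one level deeper, the truncation-and-re-solving device already present in the proof of Lemma \ref{lem: mixed equality}. Put $\f^s=\max(\f,-s)$, $\p_i^s=\max(\p_i,-s)$ and solve the exponential equations $H_m(u^s)=e^{u^s-u}[\tilde f\, H_m(\f^s)+h_1 H_m(\p_1^s)]$ and its analogue for $v^s$, solvable as in Theorem \ref{thm: exponential}. Exactly as in Step 2 of Lemma \ref{lem: mixed equality}, the solutions $u^s,v^s$ are uniformly bounded and, by Lemma \ref{lem: convergence}, Lemma \ref{lem: comparison} and the domination principle (Lemma \ref{lem: domination principle}), converge in $\Capm$ to $u,v$. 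Applying Lemma \ref{lem: mixed equality} to the bounded tuple $(u^s,v^s,\f^s,\p_1^s,\p_2^s)$ gives $\omega_{u^s}^k\wedge\omega_{v^s}^{m-k}\wedge\omega^{n-m}\geq (e^{u^s-u}\tilde f)^{k/m}(e^{v^s-v}\tilde g)^{(m-k)/m}H_m(\f^s)$. Letting $s\to\infty$ should close the argument: the left-hand side converges to $\omega_u^k\wedge\omega_v^{m-k}\wedge\omega^{n-m}$ by Lemma \ref{lem: cap convergence} applied to the uniformly bounded $u^s,v^s$, while $\ind_{\{\f>-s\}}H_m(\f^s)=\ind_{\{\f>-s\}}c\mu\nearrow c\mu$ and the mass of $H_m(\f^s)$ escaping onto $\{\f\leq-s\}$ tends to $0$ precisely because $\f\in\EcXo$.

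I expect the main obstacle to be this last passage to the limit with an \emph{unbounded} reference potential $\f$. The truncated references $\f^s$ are not uniformly bounded, so Lemma \ref{lem: cap convergence} cannot be invoked for the reference term directly, as it was in Step 2 of Lemma \ref{lem: mixed equality} where all potentials stayed bounded; instead one must control $H_m(\f^s)$ through the mass-concentration/no-mass-on-$m$-polar-sets property of $\EcXo$, and separately verify that the coefficients $(e^{u^s-u}\tilde f)^{k/m}(e^{v^s-v}\tilde g)^{(m-k)/m}$ converge $\mu$-almost everywhere. The latter is where the hypotheses genuinely bite: one needs $f,g$ to behave well under convergence in capacity, which is inherited from the domination $f\mu\leq H_m(u)$, $g\mu\leq H_m(v)$ of the given densities by honest Hessian measures of bounded potentials. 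Making this compatibility precise, rather than the G{\aa}rding-type pointwise inequality itself, is the delicate technical point on which the whole reduction turns.
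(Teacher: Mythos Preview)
Your overall architecture---reduce to Lemma~\ref{lem: mixed equality} by (i) truncating $u,v$ and $f,g$, and (ii) manufacturing a Hessian decomposition $H_m(u)=\tilde f\,H_m(\f)+h_1H_m(\p_1)$---is the same as the paper's. The difference is in how the decomposition is produced, and this is exactly the spot where you run into trouble.

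The paper never faces an unbounded reference potential. Two devices make this possible. First, since $\mu$ vanishes on $m$-polar sets, the generalized Radon--Nikodym theorem \cite{Rai69} together with \cite{LN14} yields $\mu=h\,H_m(\f)$ with $\f$ \emph{bounded} and $h\in L^1(H_m(\f))$; absorbing $h$ into $f,g$ one may simply assume $\mu=H_m(\f)$ with $\f\in\SHXo\cap L^\infty(X)$. Second, rather than solving for auxiliary $\p_1,\p_2$ carrying the defect measures $\nu_1,\nu_2$, the paper uses a $\delta$-trick: for fixed $\delta\in(0,1)$ it solves
\[
H_m(u_j)=e^{\,j(u_j-u)}\bigl[(1-\delta)f\,H_m(\f)+\delta\,H_m(u)\bigr],
\]
so that the ``error'' potential is $u$ itself, already bounded. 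The comparison in Lemma~\ref{lem: comparison} then gives $u\le u_j\le u+j^{-1}\log(\delta^{-1})$, hence $u_j\to u$ \emph{uniformly}; Lemma~\ref{lem: mixed equality} applies with all five potentials bounded, and one passes to the limit $j\to\infty$, then $\delta\to0$. No unbounded $\f,\p_1,\p_2$ ever enter.

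Your route---solving $H_m(\f)=c\mu$, $H_m(\p_i)\propto\nu_i$ in $\EcXo$ and then truncating $\f^s,\p_i^s$---is in principle workable, but the step you flag is genuinely incomplete as written. The claim that the solutions $u^s$ of
\[
H_m(u^s)=e^{u^s-u}\bigl[\tilde f\,H_m(\f^s)+h_1H_m(\p_1^s)\bigr]
\]
are \emph{uniformly} bounded in $s$ does not follow from the argument used in Step~2 of Lemma~\ref{lem: mixed equality}: there the approximants $\f^j,\p_i^j$ were uniformly bounded because the limits $\f,\p_i$ were bounded, whereas your $\f^s=\max(\f,-s)$ have $\inf\f^s=-s\to-\infty$. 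Without a uniform lower barrier for $u^s$ independent of $s$, Lemma~\ref{lem: convergence} and Lemma~\ref{lem: cap convergence} are not available, and the limit $s\to\infty$ cannot be justified as stated. The paper's two tricks (bounded $\f$ via Radon--Nikodym, and recycling $u,v$ as the error potentials via the $\delta$-perturbation) are precisely what circumvents this.
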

\begin{proof}
We first treat the case when $u,v,f,g$ are bounded and $\mu=H_m(\f)$ for some bounded $\omega$-$m$-sh function $\f$. Fix $\delta>0$. For each $j>1$ let
$u_j$ solve the following equation 
$$
H_m(u_j) = e^{j(u_j-u)}\left[ ( 1-\delta ) f\mu + \delta  H_m(u)\right].
$$
One can solve the above equation by using the variational method exactly the same way as in \cite{LN14} (see Theorem \ref{thm: exponential}).
By the comparison principle one gets $u\leq u_j\leq u + j^{-1}\log( \delta^{-1})$. Thus $u_j$ converges uniformly on $X$ 
to $u$.
Now, do the same thing for $v$ and get a sequence $v_j$ which converges uniformly to $v$. Applying Lemma \ref{lem: mixed equality} and letting $j\to +\infty$ we get
$$
\omega_u^k \wedge \omega_v^{m-k}\wedge \omega^{n-m} \geq (1-\delta) f^{k/m}g^{(m-k)/m} \mu .
$$ 
Now, the result follows by letting $\delta \to 0$.
\medskip

If $f,g$ and $\f$ are bounded but $u,v$ are not bounded we can argue as follows. Consider $u_j=\max(u,-j)$ and $v_j=\max(v,-j)$. Then 
$$
H_m(u_j) \geq \ind_{\{u > -j\}} f H_m(\f) \ {\rm and }\ H_m(v_j) \geq \ind_{\{v > -j\}} f H_m(\f).
$$
We then apply the previous step for $u_j,v_j$ and let $j$ go to $+\infty$, noting that $H_m(\f)$ does not charge $m$-polar sets.
\medskip

For the general case observe that since $\mu$ does not charge $m$-polar sets, it follows from \cite{LN14} and the generalized 
Radon-Nikodym theorem \cite{Rai69} that we can write $\mu = h H_m(\f)$ for some 
$\f\in \SHXo \cap L^{\infty}(X)$ and $h\in L^1(H_m(\f))$. We thus can assume that $\mu =H_m(\phi)$ for some bounded $\omega$-$m$-sh function 
$\phi$. Now, consider $f_j:=\min(f,j)$ and $g_j:=\min(g,j)$. Applying the first step and letting $j\to+\infty$ we get the result.  
\end{proof}
\begin{rem}
In the first step of the proof, instead of solving a family of equations with parameter $j$ we can argue as follows. Observe first that  $H_m(u)-f\mu$ is a positive Radon measure dominated by $H_m(u)$ with $u\in L^{\infty}(X)$. Then one can   find  $\p\in \SHXo \cap L^{\infty}(X)$ such that
$$
H_m(\p)=e^{\p} [H_m(u)-f\mu] .
$$
Indeed, the existence of a solution follows from the variational approach and it is bounded since there is a bounded subsolution (see also \cite{Zer14} for a more detailed discussion about the envelop method for the complex Monge-Amp\`ere equation).
\end{rem}

We finish this section by presenting how {\it local} inequalities for Hessian measures stem from their global counterpart. More precisely we prove the following theorem:
\begin{thm}
 \label{thm: global to local}
Let $\omega$ be a germ of a K\"ahler metric near $0\in \C^n$. Let also $u_1, u_2,\cdots, u_m$ be bounded $m$-subharmonic functions satisfying $(dd^cu_j)^m\wedge\omega^{n-m}\geq f_j\mu$ for some non negative Borel measure vanishing on all $m$-polar sets. Then
\begin{equation}\label{eq: mixed local}
dd^cu_1\wedge\cdots\wedge dd^cu_m\wedge\omega^{n-m}\geq(f_1\cdots f_m)^{1/m}\mu.
\end{equation}
\end{thm}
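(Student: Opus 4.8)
The plan is to reduce the local statement to the global inequality of Theorem \ref{theorem 1} by gluing a neighbourhood of $0$ into a compact K\"ahler manifold on which the metric agrees with $\omega$. Since \eqref{eq: mixed local} is an inequality of measures it is purely local, so it suffices to prove it on a fixed small ball $B_0=B(0,r_0)$. First I would shrink the domain so that on a larger ball $B_2=B(0,r_2)$ with $B_0\Subset B_2$ one has $\omega=dd^c\rho$ for a smooth strictly plurisubharmonic (hence $m$-sh) potential $\rho$, and so that $u_1,\dots,u_m$, $\mu$ and $f_1,\dots,f_m$ are all defined on $B_2$.

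The first construction is the compact model. Choosing a lattice $\Lambda$ whose fundamental domain contains $B_2$, I regard $B_2$ as sitting inside the torus $T=\C^n/\Lambda$, carrying away from $B_2$ the flat form $\lambda\,dd^c|z|^2$. Gluing $\rho$ to $\lambda|z|^2-A$ by a regularized maximum (with $\lambda$ large and $A$ chosen so that $\rho<\lambda|z|^2-A$ near $\partial B_2$ while $\rho>\lambda|z|^2-A$ on $B_1:=B(0,r_1)$, $r_0<r_1<r_2$) produces a smooth strictly plurisubharmonic $\Phi$ on $B_2$ with $\Phi=\rho$ on $B_1$ and $\Phi=\lambda|z|^2-A$ near $\partial B_2$. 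Setting $\omega_T:=dd^c\Phi$ on $B_2$ and $\omega_T:=\lambda\,dd^c|z|^2$ on $T\setminus B_1$ defines a K\"ahler form on $T$ with $\omega_T=\omega$ on $B_1$; in particular the local notion of $m$-subharmonicity with respect to $\omega$ coincides on $B_1$ with $\omega_T$-$m$-subharmonicity.

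Next I would transport the data to $T$. The function $\tilde u_j:=u_j-\rho$ is bounded and $\omega_T$-$m$-sh on $B_1$, since $\omega_T+dd^c\tilde u_j=dd^cu_j$ there. Using a global bounded $\omega_T$-$m$-sh \emph{bump} $v$ on $T$ that is very negative on $B_0$ yet exceeds each $\sup_{B_1}\tilde u_j$ on an annulus near $\partial B_1$ (built from a convex radial profile $c(|z|^2-s^2)$ near $0$, bent back down near $\partial B_2$ where the room in $\omega_T$ is large, and then capped by a constant through a regularized maximum), the gluing $U_j:=\max{}_\vep(\tilde u_j,v)$ on $B_1$ and $U_j:=v$ on $T\setminus B_1$ is a bounded $\omega_T$-$m$-sh function on $T$ with $U_j=\tilde u_j$ on $B_0$; being bounded, $U_j\in\mathcal{E}(T,\omega_T,m)$. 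I also set $\tilde\mu:=\ind_{B_0}\mu$ and $\tilde f_j:=\ind_{B_0}f_j$; then $\tilde\mu$ is a positive Radon measure on $T$ vanishing on $m$-polar sets (polarity being local and the metrics locally comparable) and each $\tilde f_j\in L^1(\tilde\mu)$. By plurifine locality of the Hessian operator, $H_m(U_j)=(dd^cu_j)^m\wedge\omega^{n-m}\geq f_j\mu=\tilde f_j\tilde\mu$ on $B_0$, while outside $B_0$ the inequality $H_m(U_j)\geq\tilde f_j\tilde\mu$ is trivial since $\tilde\mu$ vanishes there.

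Finally, applying Theorem \ref{theorem 1} to $U_1,\dots,U_m\in\mathcal{E}(T,\omega_T,m)$ and $\tilde\mu$ gives $(\omega_T+dd^cU_1)\wedge\cdots\wedge(\omega_T+dd^cU_m)\wedge\omega_T^{n-m}\geq(\tilde f_1\cdots\tilde f_m)^{1/m}\tilde\mu$ on $T$. Restricting to $B_0$, where $\omega_T+dd^cU_j=dd^cu_j$, $\omega_T=\omega$, $\tilde f_j=f_j$ and $\tilde\mu=\mu$, yields exactly \eqref{eq: mixed local} on the neighbourhood $B_0$ of $0$. I expect the genuine difficulty to lie in the two gluing steps: producing the compact model with $\omega_T=\omega$ on $B_1$, and—more delicately—extending $\tilde u_j$ to a \emph{global} bounded $\omega_T$-$m$-sh function that agrees with it on \emph{all} of $B_0$. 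The latter forces the auxiliary $v$ to have a strict dip over $B_0$ while staying globally bounded and $m$-sh, which is precisely where enlarging $\omega_T$ away from $B_1$ is essential; one must also check that the regularized maxima preserve the cone $\Gamma_m$ and that the exact equality $U_j=\tilde u_j$ is attained on $B_0$ rather than merely an inequality.
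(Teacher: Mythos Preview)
Your proposal is correct and follows essentially the same strategy as the paper's proof: embed a small ball into a compact torus so that the K\"ahler form agrees with $\omega$ on an inner ball, extend each $u_j-\rho$ to a global $\omega_T$-$m$-sh function by a regularized maximum with a barrier that is very negative near $0$, and then invoke the global mixed inequality. The only minor difference is that the paper uses as barrier a global $\omega_T$-$m$-sh function with an \emph{isolated pole} at $0$ (which lies in $\mathcal{E}(T,\omega_T,m)$ automatically), whereas you build a bounded ``dip'' function $v$; both choices achieve $U_j=\tilde u_j$ on $B_0$, and your variant has the slight advantage that the resulting $U_j$ are manifestly bounded.
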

Before we start the proof we recall the following standard fact, based on the patching of local potentials (see, for example \cite{Ve10}, Lemma 3.8 for a discussion):
\begin{lem} Let $M$ be the unit ball in $\C^n$ equipped with the K\"ahler metric $\omega=dd^c\phi$, with $\phi$ bounded. Fix a smaller ball $B$ centered at $0$. Then $(M,\omega)$ admits an isometric embedding into a compact complex torus $(X,\tilde{\omega})$ such that $\omega|_{B}=\tilde{\omega}_{Im(B)}$ with $Im(B)$ being the image of $B$ under the isometry. Moreover $\tilde{\omega}$ can be taken to be the standard flat metric on a neighborhood of $X\setminus Im(M)$. 
\end{lem}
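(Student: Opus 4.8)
The plan is to build $X$ as a flat complex torus into which the unit ball embeds injectively, and to manufacture $\tilde\omega$ by interpolating between the given potential $\phi$ and the standard flat potential across an annular collar inside $M$, using a regularized maximum. The two ingredients are thus: (i) a patching of potentials that keeps the form strictly positive and makes it \emph{exactly} flat near $\partial M$, and (ii) a choice of lattice making $\pi\colon \C^n\to X=\C^n/\Lambda$ injective on $M$ while leaving room in the complement. Since the flat form is translation invariant it descends to $X$, and the two recipes for $\tilde\omega$ will agree on their overlap by construction.

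First I would normalize: write $\omega_0=dd^c|z|^2$ for the Euclidean Kähler form, let $K$ be a bound with $|\phi|\le K$ on $M$, and fix $r_0<r_1<1$ with $B=B(0,r_0)$. As $\omega=dd^c\phi$ is a smooth Kähler form, compactness of $\overline{B(0,r_1)}$ gives a lower bound $\omega\ge \varepsilon\,\omega_0$ for some $\varepsilon>0$. I then choose $A>\max\!\bigl(\varepsilon,\ 2K/(r_1^2-r_0^2)\bigr)$ and a constant $C$ in the (nonempty, by the lower bound on $A$) interval determined by
$$A\,r_0^2+C<-K\qquad\text{and}\qquad A\,r_1^2+C>K.$$
The role of these two inequalities is to make the candidate flat potential $A|z|^2+C$ lie strictly below $\phi$ on $B$ and strictly above $\phi$ on the region $r_1\le|z|<1$.

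Next comes the patch. Let $\max_\eta$ denote the regularized maximum and set, on $M$,
$$\tilde\phi:=\max\nolimits_\eta\bigl(\phi,\ A|z|^2+C\bigr).$$
For $\eta$ small, the ordering above yields $\tilde\phi=\phi$ on $B$ and $\tilde\phi=A|z|^2+C$ on the collar $r_1\le|z|<1$; the transition is confined to $r_0<|z|<r_1$, where both $dd^c\phi=\omega\ge\varepsilon\,\omega_0$ and $dd^c(A|z|^2+C)=A\,\omega_0\ge\varepsilon\,\omega_0$. Since $\max_\eta$ is smooth, convex and, with unit-sum weights, averages the two complex Hessians, one gets $dd^c\tilde\phi\ge\varepsilon\,\omega_0>0$; hence $\tilde\phi$ is a smooth strictly plurisubharmonic potential whose form equals $\omega$ on $B$ and equals the flat form $A\omega_0$ near $\partial M$. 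Now realize the torus: under $\C^n\cong\R^{2n}$ take $\Lambda=(R\mathbb{Z})^{2n}$ with $R>3$, so $\pi$ is injective on $M$ (any nonzero lattice vector has length $R>2>\mathrm{diam}\,M$) and $Im(M):=\pi(M)$ is a small ball leaving a nonempty open complement. Define $\tilde\omega=\pi_*(dd^c\tilde\phi)$ on $Im(M)$ and $\tilde\omega=A\omega_0$ on $X\setminus\pi(\overline{B(0,r_1)})$; on the overlap both equal $A\omega_0$, so $\tilde\omega$ is a well-defined smooth closed positive $(1,1)$-form, i.e. a Kähler metric on $X$, flat on a neighborhood of $X\setminus Im(M)$. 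Finally $\pi|_M$ is a holomorphic embedding with $\pi^*\tilde\omega=\omega$ on $B$, giving the required identification $\omega|_B=\tilde\omega|_{Im(B)}$.

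The hard part is the middle step: ensuring the glued potential is simultaneously smooth, \emph{strictly} plurisubharmonic (so $\tilde\omega$ is genuinely Kähler, not merely semipositive), and rigidly flat throughout a full neighborhood of $\partial M$ so that it matches the descended metric on the torus. This is precisely what the ordering inequalities on $(A,C)$ together with the Hessian-averaging property of the regularized maximum are designed to deliver; once they are secured, the choice of lattice, the descent of $A\omega_0$, and the verification of the isometry on $B$ are all routine.
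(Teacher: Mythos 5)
Your proof is correct and follows essentially the same route the paper intends: it states this lemma without proof as a standard fact ``based on the patching of local potentials'' (citing Verbitsky, Lemma 3.8), and your argument --- gluing $\phi$ to a flat potential $A|z|^2+C$ by the regularized maximum across a collar, with the ordering inequalities $Ar_0^2+C<-K$ and $Ar_1^2+C>K$ made compatible by $A>2K/(r_1^2-r_0^2)$, then descending to a large flat torus --- is precisely that argument, carried out correctly (including the key point that $\max\nolimits_\eta$ preserves the uniform lower bound $\varepsilon\,\omega_0$ on the complex Hessian). The only cosmetic discrepancy is that your metric near the complement is $A\omega_0$ rather than the standard flat form, which is fixed by precomposing the embedding with the dilation $z\mapsto\sqrt{A}\,z$ (equivalently, rescaling the lattice), and is in any case immaterial for the application in Theorem 3.9.
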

Now we can prove the local mixed Hessian inequalities:
\begin{proof}[Proof of theorem \ref{thm: global to local}]
 Suppose first that all the functions $u_1,\cdots u_n$ are bounded near $0\in C^n$. It is enough to establish the inequality
 in a small neighborhood of $0$. Shrinking that neighborhood if necessary we may assume that $\omega=dd^c\phi$ for some
 bounded plurisubharmonic function $\phi$.

 Exploiting the previous lemma we may assume that (with the obvious identifications) $\omega$ is a K\"aler form on a compact 
K\"ahler manifold $X$. Shrinking the domain further if necessary, we can patch each of the functions $u_j-\phi$ with a suitable global $\omega$-$m$-subharmonic function with an isolated pole at $0\in X$ (strictly speaking we have to use the regularized maximum technique instead of ordinary maximum- see \cite{Ve10} for a discussion) we get {\it global} $\omega$-$m$-subharmonic functions $\tilde{u}_j$ such that $\tilde{u}_j=u_j-\phi$ in a neighborhood of $0\in X$. Now by the global inequality for mixed Hessian measures we get
$$\omega_{\tilde{u}_1}\wedge\cdots\wedge\omega_{\tilde{u}_m}\wedge\omega^{n-m}\geq (f_1\cdots f_m)^{1/m}\mu$$
in a neighborhood of $0\in X$ which is exactly what we seek.

Now the passage from $u_j$ bounded to general $u_j$ is done exactly like in the global argument. 
\end{proof}

\section{uniqueness}\label{Uniq}
The main result of this section is the following uniqueness result for the normalized solutions to the complex hessian equations:
\begin{thm}\label{thm: uniqueness}
 $u,v$ be functions in $\EcXo$ and $\mu$ be a positive Radon measure vanishing on $m$-polar sets such that 
$$H_m(u)=\mu=H_m(v).$$
Then $u-v$ is a constant.
\end{thm}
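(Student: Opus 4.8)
The plan is to use the mixed Hessian inequality (Theorem~\ref{thm: mixed hessian inequality}) to force \emph{all} intermediate mixed Hessian measures built from $u$ and $v$ to coincide with $\mu$, and then to run a Calabi-type energy identity which, via a Cauchy--Schwarz bootstrap, shows that the gradient of $u-v$ vanishes. The role of Theorem~\ref{thm: mixed hessian inequality} is precisely to supply the control on the mixed measures that legitimizes this argument for unbounded potentials.

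First I would apply Theorem~\ref{thm: mixed hessian inequality} with $f\equiv g\equiv 1$ to the pair $u,v$, obtaining
$$
\omega_u^k\wedge\omega_v^{m-k}\wedge\omega^{n-m}\ge\mu,\qquad 1\le k\le m-1 .
$$
Since $u,v\in\EcXo$ are of full mass, each such mixed product has total mass $\int_X\omega^n=\mu(X)$ (the full-mass property of $\EcXo$, see \cite{LN14}). Comparing total masses, the inequalities must be equalities, so
$$
\omega_u^k\wedge\omega_v^{m-k}\wedge\omega^{n-m}=\mu,\qquad 0\le k\le m .
$$
Equivalently, the midpoint $w=\tfrac12(u+v)\in\EcXo$ satisfies $H_m(w)\ge\mu$ with full mass, hence $H_m(w)=\mu$, which again forces every intermediate term to equal $\mu$. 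In particular all these currents vanish on $m$-polar sets and are dominated by $\mu$, which is the key to the finiteness and integration-by-parts manipulations below.

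Next, writing $h:=u-v$ and using the telescoping factorization $\omega_u^m-\omega_v^m=dd^ch\wedge\sum_{k=0}^{m-1}\omega_u^k\wedge\omega_v^{m-1-k}$ together with integration by parts, I would establish the energy identity
$$
0=\int_X h\,\bigl(H_m(u)-H_m(v)\bigr)=-\sum_{k=0}^{m-1}\int_X dh\wedge d^ch\wedge\omega_u^k\wedge\omega_v^{m-1-k}\wedge\omega^{n-m}.
$$
Each summand is the integral of a positive current against $dh\wedge d^ch$ and is therefore nonnegative; since they sum to $0$, every one of them vanishes, so in particular $\int_X dh\wedge d^ch\wedge\omega_u^{m-1}\wedge\omega^{n-m}=0$. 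A chain of Cauchy--Schwarz inequalities of the type used in \cite{Bl03} and \cite{Dinew09} then lets me trade the mixed background current $\omega_u^{m-1}$ step by step for $\omega^{m-1}$, yielding $\int_X dh\wedge d^ch\wedge\omega^{n-1}=0$. As $\omega^{n-1}$ is strictly positive this forces $dh=0$, i.e. $u-v$ is constant. For bounded $u,v$ the whole argument is exactly Lemma~\ref{lem: uniqueness bounded}, which I would simply quote; the genuinely new content is the unbounded case.

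The main obstacle is that for unbounded $u,v\in\EcXo$ the mixed products and the gradient energies above are not obviously finite or weakly continuous, so both the integration by parts and the passage between the displayed identities require justification. I would handle this by truncating, setting $u^N=\max(u,-N)$ and $v^N=\max(v,-N)$, running the bounded argument for $u^N,v^N$, and letting $N\to\infty$ using convergence in capacity (Lemma~\ref{lem: cap convergence}) together with the full-mass property of $\EcXo$ developed in \cite{LN14} to ensure that no mass escapes and that the truncation error terms tend to $0$; the equalities $\omega_u^k\wedge\omega_v^{m-k}\wedge\omega^{n-m}=\mu$ are what guarantee the limiting currents carry no mass on the $m$-polar set $\{u=-\infty\}\cup\{v=-\infty\}$. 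This is exactly where the hypothesis $u,v\in\EcXo$ is used in an essential way; as the example of Section~\ref{example} shows, both the mixed inequality and the uniqueness statement fail outside the full-mass class.
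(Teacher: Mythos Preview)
Your strategy is genuinely different from the paper's. The paper follows Ko{\l}odziej's mass-concentration route (as in \cite{Dinew09}): assuming $u-v$ is not constant, one finds a level $t$ with $\mu(\{u-v=t\})=0$ while both $\{u-v<t\}$ and $\{u-v>t\}$ carry positive $\mu$-mass, solves an auxiliary equation $H_m(w)=\hat\mu$ for a rescaled measure supported on $\{u<v\}$, and combines the partial comparison principle with Theorem~\ref{thm: mixed hessian inequality} to obtain a contradiction; an induction then pushes the vanishing of mass on $\{u\neq v\}$ down from $H_m$ to $\omega^n$. Your route is the Calabi--B{\l}ocki energy method: first force $\omega_u^k\wedge\omega_v^{m-k}\wedge\omega^{n-m}=\mu$ by total-mass considerations (this step is fine, and the paper uses it too), then integrate by parts and bootstrap via Cauchy--Schwarz.

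The gap is in the passage to unbounded potentials. For the truncations $u^N,v^N$ one has $H_m(u^N)\neq H_m(v^N)$, so the left side of your energy identity is not zero; the discrepancy lives on $\{u\le -N\}\cup\{v\le -N\}$, where $|u^N-v^N|$ is of order $N$. Thus the error you must absorb is governed by quantities like $N\cdot H_m(u^N)(\{u\le -N\})$ and $N\cdot\mu(\{v\le -N\})$, and the full-mass property of $\EcXo$ alone does \emph{not} force these to vanish as $N\to\infty$: that is precisely an $\Ect$-type condition, which is not assumed. The same finiteness issue reappears at every step of the Cauchy--Schwarz bootstrap, which needs gradient energies such as $\int_X du\wedge d^cu\wedge\omega_u^{k}\wedge\omega^{n-1-k}$ to be finite. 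So as written your argument proves uniqueness in $\Ect$ but not in all of $\EcXo$. The mass-concentration argument in the paper sidesteps this entirely because it only compares \emph{masses} of Hessian measures on sublevel sets and never integrates an unbounded function against a Hessian measure.
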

\begin{proof}
Given the inequality for mixed hessian measures the argument pretty much follows the one from \cite{Dinew09}. We present the details for the sake of completeness.

Suppose on contrary that $u-v\neq$ const. Note that $\forall t\in \mathbb R\cup\{\infty\}\cup\{-\infty\}$ the sets $\{u-v=t\}$ are Borel and hence at most countably many of these are charged by $\mu$. Observe that by assumption the sets with $t=+\infty$ or $-\infty$ are massless for they are $m$-polar . Just like in \cite{Dinew09} we prove that actually precisely one of the remaining sets has positive $\mu$-mass. For if not then there must exist $t\in\R$ such that $\{u-v=t\}$ is massless, while $\mu(\{u-v<t\}), \mu(\{u-v>t\}) >c>0$ for some constant $c\leq 1/2$. Note that after adding a constant we can and will assume $t$ to be zero.

Consider the new measure 
$$\widehat{\mu}:=\begin{cases} C\mu, & on\ \{u<v\}\\
  0, & on\ \{u\geq v\},              \end{cases}$$
where $C>1$\ is a nonnegative normalization constant so that $\widehat{\mu}$\ is a nonnegative probability measure (note that this is possible, since, by assumption, $\mu$\ charges the set $\{u\geq v\}$).

Of course $\widehat{\mu}$\ does not charge pluripolar sets either (and is also a Borel measure since the set $\{u\geq v\}$\ is Borel). By \cite{LN14} we can solve the Hessian equation
$$H_m(w)=\widehat{\mu},\ \ w\in\EcXo,\ sup_X w=0.$$

Consider the set inclusion
$$U_t:=\{(1-t)u<(1-t)v+tw\}\subset\{u<v\}$$
for every $t\in(0,1)$. Hence on $U_t$ we have
$$
\omega_{(1-t)v+tw}\wedge\omega_u^{m-1}\wedge\omega^{n-m}\geq(1-t)\mu+tC^{1/m}\mu=(1+t(C^{1/m}-1))H_m(u),
$$
where we have made use of Theorem \ref{thm: mixed hessian inequality}.

Exploiting the partial comparison principle (see \cite{LN14}) we get
\begin{align*}
&(1+((C)^{1/m}-1)t)\int_{U_t}H_m(u)\leq\int_{U_t}\omega_{(1-t)v+tw}\wedge\omega_u^{m-1}\wedge\omega^{n-m}\\
&\leq\int_{U_t}\omega_{(1-t)u+t0}\wedge\omega_u^{m-1}\wedge\omega^{n-m}=(1-t)\int_{U_t}
H_m(u)+t\int_{U_t}\omega_u^{m-1}\wedge\omega^{n-m+1}.
\end{align*}
 In other words we get
\begin{equation}\label{1}
C^{1/m}\int_{U_t}\mu\leq\int_{U_t}\omega_u^{m-1}\wedge\omega^{n-m+1}.
\end{equation}
Note that the inequality for Hessian measures coupled with total volume considerations yields
$$\omega_u^k\wedge\omega_v^{m-k}\wedge\omega^{n-m}=\mu$$
for any $k\in\{0,1,\cdots,m\}$, see Corollary 2.2. in \cite{Dinew09}. 
But then the same argument with $u$ exchanged by $v$ leads to the inequality
$$C^{1/m}\int_{U_t}\mu\leq\int_{U_t}\omega_v^{m-1}\wedge\omega^{n-m+1}.$$ 

If we let now $t$ to zero we observe that $U_t$ converges to $\{u<v\}\setminus\{w=-\infty\}$, but $\{w=-\infty\}$ is an $m$-polar set and hence is negligible with respect to $\mu$. Thus we get 
$$C^{1/m}\int_{\{u<v\}}\mu\leq\int_{\{u<v\}}\omega_v^{m-1}\wedge\omega^{n-m+1},$$
as well as
$$C^{1/m}\int_{\{u<v\}}\mu\leq\int_{\{u<v\}}\omega_u^{m-1}\wedge\omega^{n-m+1}.$$ 

Playing the same game on $\{u>v\}$ (namely we define a measure like $\hat{\mu}$ with respect to the set $\{u>v\}$) we also get for a different constant $D>1$ the inequality

$$D^{1/m}\int_{\{u>v\}}\mu\leq\int_{\{u>v\}}\omega_v^{m-1}\wedge\omega^{n-m+1}.$$

Coupling these with the fact that $\{u=v\}$ is massless by construction we end up with the inequalities  
$$\min\{C^{1/m},D^{1/m}\}\int_X\mu\leq \int_{\{u<v\}}\omega_v^{m-1}\wedge\omega^{n-m+1}+\int_{\{u>v\}}\omega_v^{m-1}\wedge\omega^{n-m+1}\leq\int_X\mu,$$
so $1<\min\{C^{1/m},D^{1/m}\}\leq1$, a contradiction.

Thus we know that $\mu(\{u\neq v\})=0$. Just like in \cite{Dinew09} our next and final task will be to prove analogous mass vanishing for $H_j(u), H_j(v),\ j=0,1,\cdots,m-1$. Indeed, consider the sets
$$V_{t,j}:=\{u+(t/j)u_j+(3/2)t
<v\}\subset\{u<v\},$$
where $u_j:=\max\{u,-j\}$.

The partial comparison principle results in
$$\int_{V_{t,j}}\omega_u^{k}\wedge\omega_v^{m-1-k}\wedge(\omega_v+(t/j)\omega)\wedge\omega^{n-m}\leq
\int_{V_{t,j}}\omega_u^{k}\wedge\omega_v^{m-1-k}\wedge(\omega_u+(t/j)\omega_{u_j})\wedge\omega^{n-m}.$$
Now, (recall $\omega_u^{k}\wedge\omega_v^{m-k}\wedge\omega^{n-m}=\mu,\ \forall k\in\{0,\cdots m\}$) the equation above reduces to
$$\int_{V_{t,j}}\omega_u^{k}\wedge\omega_v^{m-1-k}\wedge\omega^{n-m+1}\leq
\int_{V_{t,j}}\omega_u^{k}\wedge\omega_v^{m-1-k}\wedge\omega_{u_j}\wedge\omega^{n-m}.$$
Note that $V_{t,j}$\ is a decreasing sequence of sets in terms of $j$. Letting $j\rightarrow\infty$, using vanishing on pluripolar sets and then letting $t$ to zero we obtain
$$\int_{\{u<v\}}\omega_u^{k}\wedge\omega_v^{m-1-k}\wedge\omega^{n-m+1}\leq
\int_{\{u<v\}}H_m(u)=0.$$
In the same vein the measures $\omega_u^{k}\wedge\omega_v^{m-1-k}\wedge\omega^{n-m+1}$ put no mass on $\{u>v\}$. Finally exchanging $\omega_u^{k}\wedge\omega_v^{m-1-k}\wedge\omega^{n-m}$\ in the argument above with  $\omega_u^k\wedge\omega_v^{m-2-k}\wedge\omega^{n-m+1}$\ we obtain mass vanishing on $\{u<v\}$ for the measures
$\omega_u^k\wedge\omega_v^{m-2-k}\wedge\omega^{n-m+2},\ k=0,1,\cdots, m-2$. An easy induction finally yields that $\omega^n$ has its mass supported on $\{u=v\}$ which is impossible unless $u$ equals $v$.
\end{proof}

\section{An example}\label{example}
In this section we give an example which shows that vanishing on $m$-polar sets cannot be removed from the assumptions. The example (adapted from \cite{Dinew07}) actually shows that our mixed Hessian inequality fails outside the class $\EcXo$. 
\begin{exa}
Let $n\geq 3, m=2$ and $\omega=dd^c |z|^2$ be the flat K\"ahler metric in 
$\C^n$. Consider the following functions
$$
u_k(z):= \max \left(\frac{1}{k}\log |z_1|, k^2 \log |z_2|\right), \
{\rm and}\ v_k(z):=\max \left(\frac{1}{k}\log |z_2|, k^2 \log |z_1|\right). 
$$
Then 
$$
(dd^c u_k)^2 \wedge \omega^{n-2} (dd^c v_k)^2 \wedge \omega^{n-2} = \frac{(2\pi)^2 k }{2} [z_1=z_2=0],
$$
where $[Z]$ means the current of integration along $Z$.  But 
$$
dd^c u_k \wedge dd^c v_k\wedge \omega^{n-2} = \frac{(2\pi)^2  }{2k^2} [z_1=z_2=0],
$$
which violates (\ref{eq: mixed local}) when $k>1$. 
\end{exa}
Note that in this example $u_k, v_k$ (which are plurisubharmonic functions) belong to the domain of definition of $H_m$ since they belong to $W^{1,2}(\C^n)$ (see Lemma \ref{lem: W12 Blocki}). But their Hessian measures charge the $m$-polar set $\{z_1=z_2=0\}$. 
\begin{proof}
From \cite[Example 4.1]{Dinew07} we know that $(dd^c u_k)^2=(dd^c v_k)^2$, when considered as measure in $\C^2$, is the Dirac measure of the origin with coefficient $a(k)= \frac{(2\pi)^2 k }{2}$. Thus the Hessian measure of $u_k$ and $v_k$ is the  integration along the line $\{z_1=z_2=0\}$ with coefficient $a(k)$,
 while $dd^c u_k \wedge dd^c v_k \wedge \omega^{n-2}$
is the  integration along the same line with coefficient 
$b(k)= \frac{(2\pi)^2  }{2k^2}$. When $k>1$ this violates inequality (\ref{eq: mixed local}). 
\end{proof}
\begin{rem}
 The example above has Hessian measure charging some non-dicrete analytic $m$-polar set. It is interesting to note that in the
 plurisubharmonic case it is a deep open problem whether such a function actually exists.
\end{rem}

\end{document}